\def\Za{B_a}
\def\Zpa{B_{-a}}
\newtheorem{theorem}{Theorem}[section]
\newtheorem{lemma}[theorem]{Lemma}
\newtheorem{proposition}[theorem]{Proposition}
\newtheorem*{corollary}{Corollary}
\theoremstyle{definition}
\theoremstyle{remark}
\newcommand{\vast}{\bBigg@{3.35}}
\newcommand{\Vast}{\bBigg@{5}}
\newcommand*\bigcdot{\mathpalette\bigcdot@{.6}}
\newcommand*\bigcdot@[2]{\mathbin{\vcenter{\hbox{\scalebox{#2}{$\m@th#1\bullet$}}}}}
\newcommand{\e}{\mathrm e}
\renewcommand{\(}{\left(} 
\renewcommand{\)}{\right)}
\begin{document}

\title{Estimates of Certain Exit Probabilities for $p$-Adic Brownian Bridges}
\author{David Weisbart}\address{Department of Mathematics\\University of California, Riverside\\Riverside, California, USA}\email{weisbart@math.ucr.edu} 

\maketitle


\begin{abstract} 
For each prime $p$, a diffusion constant together with a positive exponent specify a Vladimirov operator and an associated $p$-adic diffusion equation.  The fundamental solution of this pseudo-differential equation gives rise to a measure on the Skorokhod space of $p$-adic valued paths that is concentrated on the paths originating at the origin.  We calculate the first exit probabilities of paths from balls and estimate these probabilities for the brownian bridges.
\end{abstract}


\tableofcontents

\section{Introduction}\label{back}

The study of diffusion equations in the $p$-adic setting began with Vladimirov's introduction in \cite{Vlad88} of a pseudo-differential operator acting on certain complex valued functions with domain in the $p$-adic numbers that is, in many respects, an analog of the classical Laplacian.  He further investigated in \cite{Vlad90} the spectral properties of this operator, now known as the Vladimirov operator.  Both Taibleson in \cite{Taib} and Saloff-Coste in \cite{SC1} wrote even earlier about pseudo-differential operators in the context of local fields, and Saloff-Coste studied such operators in \cite{SC2} in the more general setting of local groups.  Kochubei gave in \cite{koch92} the fundamental solution to the $p$-adic analog of the diffusion equation, with the operator introduced by Vladimirov replacing the Laplace operator.  Albeverio and Karwowski further investigated diffusion in the $p$-adic setting in \cite{alb}, constructing a continuous time random walk on $\mathds Q_p$, computing its transition semigroup and infinitesimal generator, and showing among other things that the associated Dirichlet form is of jump type.  Varadarajan further explored diffusion in the non-Archimedean setting in \cite{var97}, discussing an analog to the diffusion equation where the functions have domains contained in $[0, \infty)\times \mathcal S$, where $\mathcal S$ is a finite dimensional vector space over a division ring which is finite dimensional over a local field of arbitrary characteristic.  In the current work, we calculate the exit probabilities of $p$-adic brownian paths from balls centered at the origin.  Theorem~\ref{6:thm:6} gives estimates for the exit probabilities of the $p$-adic brownian bridges and is the principle novelty of the paper.  It is significant at least in part for its potential application in estimating expected values of functionals on path space.  The intuitive nature of these estimates given the exact calculation of the exit probabilities for $p$-adic brownian motion belies the nontrivial technical challenge in obtaining these estimates.

The current work finds immediate application in an upcoming study of Adelic path measures and both diffusion and Schr\"{o}dinger type equations in the Adelic setting.  Diffusion in the $p$-adic setting has a multitude of applications.  For example, ultrametricity arises in the theory of complex systems and many references cited by \cite[Chapter~4]{KKZ} study an area where estimates on exit probabilities should be significant, for example, the works \cite{ave2, ave3, ave4, ave} of Avetisov, Bikulov, Kozyrev, and Osipov dealing with $p$-adic models for complex systems.  The estimates for the $p$-adic brownian bridges is particularly significant to the study of Feynman-Kac integrals in the $p$-adic setting since these integrals involve integration of functionals on path spaces with respect to measures on the brownian bridges.  Digernes, Varadarajan, and Varadhan in \cite{DVV} studied the spectral properties of the hamiltonians associated to a large class of quantum system using path integral methods and proved the convergence of finite dimensional real quantum systems to their continuum limits.   Albeverio, Gordon, and Khrennikov studied finite approximation of quantum systems in the setting of locally compact abelian groups in \cite{agk}, obtaining some of the results of \cite{DVV} in this very general setting.  In \cite{BDW}, Bakken, Digernes, and Weisbart improved upon some of the results of \cite{agk}, but in the restricted setting of configuration spaces that are local fields.  The results of \cite{BDW} relied on path integral methods, mirroring the approach in \cite{DVV} to obtain uniformity in the estimates.  Due to the importance of path integral methods in the study of quantum systems, we should expect application of the current work to the further study of quantum systems in the $p$-adic setting.    In an upcoming article, we apply the current work to the study of brownian motion in the adelic setting.

Fix $p$ to be a prime number.  Section~\ref{two} reviews the basic properties of the $p$-adic numbers, denoted henceforth by $\mathds Q_p$.  It also gives an overview of diffusion in the $p$-adic setting.  Take the stochastic process $X$ to be the $p$-adic brownian motion associated to a diffusion constant $\sigma$ and a Vladimirov operator with exponent $b$.  The sample paths for $X$ are in the probability space $(D([0, \infty)\colon \mathds Q_{p}), P)$, where $D([0, \infty)\colon \mathds Q_{p})$ is the Skorokhod space of paths valued in $\mathds Q_{p}$ and $P$ is a probability measure on $D([0, \infty)\colon \mathds Q_{p})$ that is concentrated on the paths originating at 0.  Section~\ref{three} presents the main equalities of the paper, exact calculations of the exit probabilities of a $p$-adic brownian motion from balls of fixed radii containing 0.  Finally, Section~\ref{four} provides estimates of the first exit probabilities from balls for the brownian bridges conditioned to originate at a fixed point in the given ball and to be at a fixed point in the same ball at a later point in time. 
 

\section{Brownian Motion in the $p$-Adic Setting}\label{two}

Gouv\^{e}a's book \cite{Gov} provides an accessible introduction to $p$-adic analysis. For a further study of $p$-adic numbers and analysis in the $p$-adic setting, see the book \cite{Ram} of Ramakrishnan and Valenza, and the earlier work \cite{Weil} of Weil.  A standard reference in the field of $p$-adic mathematical physics, the book \cite{vvz} of Vladimirov, Volovich, and Zelenov offers an excellent introduction to $p$-adic analysis and both quantum theory and diffusion in the $p$-adic setting as does Kochubei's book \cite{koch01}, which provides a greater focus on $p$-adic diffusion.  Much of the current section is taken from \cite{bw} with only minor modification and is included here for the reader's convenience and to improve some of the prior exposition.  Billingsley's book \cite{bil2} is a standard reference in probability theory that supplements the relevant theory that this section summarizes.

\subsection{Basic Facts about $\mathds Q_p$}\label{basic1}

Let $p$ be a fixed prime number.  Denote by $|\cdot|$ the absolute value on $\mathds Q_p$ and denote respectively by $B_k(x)$ and $S_k(x)$ the ball and the circle of radii $p^k$, the compact open sets \[{B}_k(x) = \{y\in \mathds Q_{p}\colon |y-x| \leq p^k\} \quad {\rm and}\quad {S}_k(x) = \{y\in \mathds Q_{p}\colon |y-x| = p^k\}.\]  Denote by $\mathds Z_{p}$ the \emph{ring of integers}, the unit ball in $\mathds Q_{p}$. Note that for any integer $a$, the set ${p^{-a}\mathds Z_p}$ is the ball $B_a(0)$.  The field $\mathds  Q_{p}$ is a locally compact abelian group under the usual addition and is a totally disconnected topological space.  Let $\mu$ be the Haar measure on the additive group $\mathds Q_{p}$, normalized to be 1 on $\mathds Z_{p}$.  Define for each $x$ in $\mathds Q_{p}$ the unique function \begin{align}\label{theafunction}a_{x}\colon \mathds Z\to \{0,1,\dots, p-1\}\end{align} that has the property that \[x = \sum_{k\in\mathds Z}a_x(k)p^k.\]  Note that there is a natural number $N$ with the property that $k$ is larger than $N$ implies that $a_{x}(-k)$ is 0.  Define by $\{x\}$ the rational number \[\{x\} = \sum_{k<0} a_{x}(k)p^k,\] where the given sum is necessarily a finite sum.  With $\mathds S^1$ denoting the multiplicative group of unit complex numbers, denote by $\chi$ the rank 0 character \[\chi\colon \mathds Q_{p} \to \mathds S^1 \quad {\rm by} \quad \chi(x) = e^{2\pi{\sqrt{-1}}\{x\}}.\] The field $\mathds Q_{p}$ is self dual, meaning that for any character  $\phi$ on $\mathds Q_{p}$, there is an $\alpha$ in $\mathds Q_{p}$ so that for all $x$ in $\mathds Q_{p}$, \[\phi(x) = \chi(\alpha x).\]  The Fourier transform $\mathcal F$ on $L^2(\mathds Q_{p})$ is the unitary extension to all of $L^2(\mathds Q_{p})$ of the operator initially defined for each $f$ in $L^1(\mathds Q_{p})\cap L^2(\mathds Q_{p})$ by \[(\mathcal Ff)(x) = \int_{\mathds Q_{p}}\chi(-xy)f(y)\,{\rm d}\mu\!\(y\).\]   The inverse Fourier transform $\mathcal F^{-1}$ acts on any $f$ in $L^1(\mathds Q_{p})\cap L^2(\mathds Q_{p})$ by \[(\mathcal F^{-1}f)(y) = \int_{\mathds Q_{p}}\chi(xy)f(x)\,{\rm d}\mu\!\(x\).\] Denote by $SB(\mathds Q_{p})$ the \emph{Schwartz-Bruhat} space of complex valued, compactly supported, locally constant functions on $\mathds Q_{p}$.  This set of functions is the $p$-adic analog of the set of complex valued, compactly supported, smooth functions on $\mathds R$ with the important difference that, unlike in the real case, $SB\!\(\mathds Q_{p}\)$ is invariant under the Fourier transform. 

\subsection{Path Spaces} 

Let $\mathcal S$ be a Polish space.  A time interval $I$ is either the interval $[0,\infty)$ or for some positive $T$ the interval $[0,T]$.  Denote by $F(I\colon \mathcal S)$ the set of all functions mapping $I$ to $\mathcal S$. Refer to this space as the space of all paths on $\mathcal S$ with domain $I$.  A set $\mathcal P$ is a \emph{path space} if there is a time interval $I$ such that $\mathcal P$ is a subset of $F(I\colon \mathcal S)$.  An \emph{epoch} is a strictly increasing finite sequence valued in $I\setminus\{0\}$.  A \emph{route} is a finite sequence of Borel subsets of $\mathcal S$.  If $k$ is a natural number, $e$ is an epoch, and $U$ is a route with \[e = (t_1, \dots, t_k) \quad {\rm and}\quad U = (U_0, \dots, U_k),\] then a \emph{history} that pairs $e$ with $U$ is a sequence of ordered pairs with \[h = \(\(0, U_0\), \(t_1, U_1\), \dots, \(t_k, U_k\)\).\]   The length of a finite sequence is the cardinality of its domain, and so a history is a sequence given by the term-wise pairing of an epoch with a route where the length of the route is equal to the length of the epoch plus 1. The additional initial place in the route is an initial location.  If $h$ is a history, then denote by $e(h)$ the epoch associated to $h$, by $U(h)$ the route associated to $h$, and by $\ell(h)$ the length of $e(h)$.  Denote by $H(I\colon \mathcal S)$ the set of all histories of $\mathcal S$ with time interval $I$ and define a function $C$ that associates to each $h$ in $H(I\colon \mathcal S)$ a set $C(h)$ by \begin{align*}C(h) &= \left\{\omega \in F(I\colon \mathcal S)\colon \omega(0) \in U_h(0) \right.\\&\qquad\qquad\left.\; {\rm and}\; i\in \mathds N\cap (0, {\rm length}(h)] \implies \omega\(e(h)_i\) \in U_h(i)\right\}.\end{align*}  Refer to the elements of $C(H(I\colon \mathcal S))$ as \emph{simple cylinder sets}.

For each $t$ in $I$, suppose that $\mathcal S_t$ is equal to $\mathcal S$.  For any finite sequence $(t_1, \dots, t_k)$ of points in $I$, denote by $\pi_{t_1, \dots, t_k}$ the canonical projection \[\pi_{t_1, \dots, t_k}\colon \prod_{t\in I}\mathcal S_t\to \mathcal S^k.\]  These projections are the \emph{length $k$ projections}.  The \emph{set of paths with $k$ conditions} is the set of all inverse images of Borel subsets of $\mathcal S^k$ under length $k$ projections.  The algebra of \emph{cylinder sets of finite type} is the union over $k$ in the natural numbers of the sets of paths with $k$ conditions and the set of \emph{cylinder sets} is the $\sigma$-algebra generated by the cylinder sets of finite type. The simple cylinder sets do not form an algebra, but they do form a $\pi$-system that generates the cylinder sets.

If a premeasure on the set of simple cylinder sets with paths valued in a Polish space and with epochs valued in a time interval $I$ is consistent, then the Kolmogorov Extension Theorem permits an extension of the premeasure to a measure on the space of all paths, denoted $F(I\colon \mathcal S)$.  Analytical investigations of path spaces often require specialization to more a restrictive path space.  Denote by $D(I \colon \mathcal S)$ the set of Skorohod paths, the set of c\'{a}dl\'{a}g functions from $I$ to $\mathcal S$ equipped with the Skorohod metric.  Measures on this space will be the central objects of our study.  If $P$ is a measure on $D(I \colon \mathcal S)$, then define for each $t$ in $I$ the random variable $X_t$ on the probability space $(D(I \colon \mathcal S), P)$ by \[X_t(\omega) = \omega(t)\quad \text{with}\quad \omega \in D(I \colon \mathcal S).\] The stochastic process $X$ is a function defined by \[X\colon t\mapsto X_t.\]  Suppose that $(A_i)_{i\in \{1, \dots, n\}}$ is a finite sequence of  subsets of $\mathcal S$ and $A$ is a subset of $\mathcal S$, that $(t_1, \dots, t_n)$ is a finite sequence in $I$ and $t$ is in $I$, and that $a$ is an element of $\mathcal S$.  The paper uses the following probabilist's convention for denoting sets, namely \begin{align*}&(X_{t}\in A) = \{\omega\in D(I \colon \mathcal S)\colon \omega(t)\in A\}, \quad (X_t = a) = (X_t \in \{a\}),\end{align*} and more generally, \[(X_{t_1}\in A_1\cap \dots \cap X_{t_n}\in A_n) = \{\omega\in D(I \colon \mathcal S)\colon \forall i\in\{1, \dots, n\}, \omega(t_i)\in A_i\}.\]

\subsection{Diffusion in the $\mathds Q_p$ Setting}\label{basic2}

Fix a positive real number $b$.  Define the multiplication operator ${\mathcal M}$ to act on $SB(\mathds Q_{p})$ by \[({\mathcal M}f)(x) = |x|^bf(x).\]  The pseudo Laplace operator $\tilde{\Delta}$ with exponent $b$ acts on $SB(\mathds Q_{p})$ by \begin{equation}\label{sec1:pseudoL}\big(\tilde{\Delta} f\big)(x) = \big(\mathcal F^{-1}{\mathcal M}\mathcal Ff\big)\!(x).\end{equation} This operator is densely defined and the Fourier transform of a multiplication operator, hence an essentially self adjoint operator from $SB(\mathds Q_p)$ to $L^2(\mathds Q_p)$.  Denote once again by $\tilde{\Delta}$ the unique self adjoint extension of the operator given by \eqref{sec1:pseudoL} to its maximal domain in $L^2(\mathds Q_{p})$.  Extend $\tilde{\Delta}$ to act on any complex valued function \[f\colon \mathds R_+\times \mathds Q_{p}\to \mathds C\] with the property that $f(t, \cdot)$ is in $\mathcal D(\tilde{\Delta})$ for each positive $t$ by fixing $t$ and viewing $f(t,\cdot)$ as a function only of $\mathds Q_{p}$.  Denote this extension by $\Delta$, the \emph{Vladimirov operator with exponent} $b$.  Define similarly the Fourier and inverse Fourier transforms on functions on $\mathds R_+\times \mathds Q_{p}$ that for each positive $t$ are square integrable over $\mathds Q_{p}$ by computing the given transform of the function for fixed positive $t$.  Fix $\sigma$ to be a positive real number and refer to it as a diffusion constant.  The pseudo differential equation \begin{align}\label{eq1} \dfrac{{\rm d}f(t,x)}{{\rm d}t} = -\sigma\Delta f(t,x)\end{align} has as its fundamental solution the function \[\rho(t,x) = \left(\mathcal F^{-1}e^{-\sigma t|\cdot|^b}\right)\!(x).\]  %
A minor modification of the more general arguments of \cite{var97} show that if $I$ is any time interval, then the function $f(t,x)$ is a probability density function that gives rise to a probability measure $P$ on $D(I\colon \mathds Q_{p})$ that is concentrated on the set of paths originating at 0.  In particular, if $h$ is a history and $U(h)_0$ is the set $\{0\}$, then define $P(C(h))$ by \begin{align}\label{meas}P(C(h)) &= \int_{U(h)_1} \cdots \int_{U(h)_{\ell(h)}} \rho(e(h)_1, x_1) \rho(e(h)_2 - e(h)_1, x_2 - x_1)\\&\hspace{.5in}\cdots \rho(t_{\ell(h)} - e(h)_{\ell(h)-1}, x_{\ell(h)} - x_{\ell(h)-1}) \,{\rm d}\mu\!\(x_{\ell(h)}\)\cdots \,{\rm d}\mu\!\(x_1\).\notag\end{align}  If 0 is not in $U_h(0)$, then $P(C(h))$ is 0.  This premeasure on the simple cylinder sets extends to the measure on $D(I \colon \mathds Q_{p})$.  Denote once again and henceforth by $P$ this probability measure.  Define the stochastic process $X$ to be the function \begin{align}\label{pProc}X \colon I\times D(I\colon \mathds Q_{p}) \to \mathds Q_{p} \quad {\rm by}\quad (t, \omega) \mapsto X_t(\omega) = \omega(t).\end{align}

For each time interval $I$, the probability measure $P$ on $D(I \colon \mathds Q_{p})$ gives full measure to paths originating at 0.   For each $x$ in $\mathds Q_{p}$, define $P_{x}$ to be the probability measure given by the same density function that defines $P$ but conditioned to give full measure to the paths originating at $x$.  If $A$ is a cylinder set, then \begin{equation}\label{conditionalProb}P_{x}(A) = P(A - x).\end{equation} For each $y$ in $\mathds Q_{p}$ and each positive $T$ in $I$, the arguments of \cite{var97} guarantee the existence of the probability measures concentrated on the $p$-adic brownian bridges, namely, the measures $P_{T, x, y}$ which are given by the measures $P_{x}$ conditioned so that paths almost surely take value $y$ at time $T$.  These conditioned measures form a continuous family of probability measures depending on the starting and ending points, as \cite{var97} discusses in its more general setting but with the diffusion constant $\sigma$ restricted to be equal to 1.  There is no obstruction to allowing for a more general diffusion constant, which \cite{bw} discusses.


\section{First Exit Probabilities for $p$-Adic Path Spaces}\label{three}

This section presents an exact calculation of the probability that a $p$-adic brownian path remains within a ball until time $T$.  Such an event is the complement of a first exit event occurring within the specified time interval.  This calculation allows for estimates of the same quantity in the context of the brownian bridges.  Denote by $\alpha$ the quantity \begin{align}\label{alpha}\alpha = 1 - \dfrac{p^b-1}{p^{b+1}-1}.\end{align}

Given a measurable subset $A$ of $\mathds Q_p$ and a function $f$ that is integrable over $A$, compress notation by writing \[\int_Af(x)\,{\rm d}x = \int_Af(x)\,{\rm d}\mu(x)\] and retain this notation throughout this and the next section. Let $X$ be the stochastic process given by \eqref{pProc}.  Define by $||X||_T$ the value \[||X||_T=\sup_{0\leq t\leq T}|X_t|.\]

\begin{theorem}\label{MainEstimate}
For any non-negative real number $T$ and integer $a$, \[P\left(||X||_T\leq p^a\right) = {\rm e}^{-\sigma\alpha Tp^{-ab}}.\]
\end{theorem}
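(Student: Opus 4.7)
The strategy is to sandwich the event $\{\|X\|_T \leq p^a\}$ between cylinder events on a dense dyadic time set, factorize the resulting cylinder probability using the $p$-adic ultrametric property, and then identify the exponential rate via a Fourier-analytic calculation.

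The first step is a c\`adl\`ag reduction. The ball $B_a(0)$ is both open and closed in $\mathds Q_p$ and the sample paths are right-continuous, so a path $\omega$ satisfies $\omega(t) \in B_a(0)$ for every $t \in [0,T]$ if and only if $\omega(t) \in B_a(0)$ for every $t$ in the dyadic set $D = \{kT/2^n : n \in \mathds N,\ 0 \leq k \leq 2^n\}$: one direction is immediate, and for the reverse one approximates any $t < T$ from above by elements of $D$ and invokes right-continuity together with the closedness of $B_a(0)$ (the endpoint $T$ already lies in $D$). Writing $A_n := \{X_{kT/2^n} \in B_a(0)\text{ for all } k = 1, \dots, 2^n\}$, downward continuity of $P$ yields $P(\|X\|_T \leq p^a) = \lim_{n \to \infty} P(A_n)$.

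The decisive step is the ultrametric factorization. From the cylinder-set formula \eqref{meas}, the integrand in $P(A_n)$ depends only on the differences $x_k - x_{k-1}$. For each $x_{k-1} \in B_a(0)$, the ultrametric identity $B_a(0) - x_{k-1} = B_a(0)$ makes the innermost integral $\int_{B_a(0)} \rho(T/2^n,\, x_k - x_{k-1})\, dx_k$ independent of $x_{k-1}$ and equal to $q_a(T/2^n)$, where $q_a(t) := \int_{B_a(0)} \rho(t, x)\, dx$. Peeling off integrations one by one gives $P(A_n) = q_a(T/2^n)^{2^n}$.

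The final step is to evaluate the limit. Applying Plancherel together with the standard transform $\mathcal F \mathds 1_{B_a(0)} = p^a \mathds 1_{B_{-a}(0)}$ reduces $q_a(t)$ to $p^a \int_{B_{-a}(0)} e^{-\sigma t |y|^b}\, dy$. Partitioning $B_{-a}(0)$ into the spheres $S_k(0)$ of Haar measure $p^k(1 - p^{-1})$ for $k \leq -a$ and applying dominated convergence (with the bound $\tfrac{1 - e^{-\sigma t p^{kb}}}{t} \leq \sigma p^{kb}$, geometrically summable against the weights $p^k(1-p^{-1})$) yields
\[
\lim_{t \to 0^+} \frac{1 - q_a(t)}{t} \;=\; \sigma p^{-ab}\cdot \frac{(1 - p^{-1})p^{b+1}}{p^{b+1} - 1} \;=\; \sigma \alpha p^{-ab},
\]
after the elementary simplification $\tfrac{p^{b+1} - p^b}{p^{b+1} - 1} = \alpha$. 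Since $q_a(T/2^n) \to 1$, the standard exponential limit $(1 - \epsilon_n)^{2^n} \to e^{-c}$ when $2^n \epsilon_n \to c$ then gives $P(\|X\|_T \leq p^a) = e^{-\sigma \alpha T p^{-ab}}$. The step most likely to require the greatest care is the ultrametric collapse, where one iteratively applies $B_a(0) - x_{k-1} = B_a(0)$ inside a telescoping sequence of integrations; the c\`adl\`ag reduction and the dominated convergence estimate are each routine once the setup is in place.
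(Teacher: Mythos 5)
Your proposal is correct and follows essentially the same route as the paper: discretize the supremum over $[0,T]$ via right-continuity of the c\`adl\`ag paths, use the ultrametric identity to collapse the finite-dimensional probability to a power $q_a(T/n)^n$ of the one-time probability, and extract the exponential rate from the small-time behavior of $q_a$. The only differences are cosmetic — you use nested dyadic partitions and compute $\lim_{t\to 0^+}(1-q_a(t))/t$ by dominated convergence on the sphere decomposition, where the paper uses uniform partitions and term-by-term differentiation of the series for $B(t)=q_a(t)$ — and your arithmetic yielding $\sigma\alpha p^{-ab}$ checks out.
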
   
  
\begin{proof}
The non-Archimedean property of the absolute value on $\mathds Q_p$ guarantees the equality of the sets $A_1$ and $A_2$ where \[A_1 = \left\{\left(x_1, x_2, \dots, x_n\right)\colon \left|x_1\right| \leq p^a, \left|x_2\right| \leq p^a, \dots, \left|x_n\right| \leq p^a\right\}\] and \[A_2 = \left\{\left(x_1, x_2, \dots, x_n\right)\colon \left|x_1\right| \leq p^a, \left|x_2-x_1\right| \leq p^a, \dots, \left|x_n-x_{n-1}\right| \leq p^a\right\}.\]   For any epoch $(t_1, \dots, t_n)$ in $[0,T]$, the equality of $A_1$ and $A_2$ implies that \begin{align}\label{sec3:maxtoprodequality}&P\Big(\max_{t_j} \big(\big|X_{t_1}\big|, \dots, \big|X_{t_n}\big|\big) \leq p^a\Big)\notag\\&\hspace{.5in} = P\Big(\max_{t_j} \big(\big|X_{t_1}\big|, \big|X_{t_1}- X_{t_2}\big|, \dots, \big|X_{t_n}- X_{t_{n-1}}\big|\big) \leq p^a\Big).\end{align}  For each natural number $j$ in $[1, N]$, denote by $t_j$ the number \[t_j = \frac{jT}{N}.\]  Define the random variable $X_0$ to be  identically 0.  For each $j$, the increments $X_{t_j} - X_{t_{j-1}}$ are independent and identically distributed, implying that \begin{align}\label{sec3:prodeqa1}P\Big(\max_{t_j} \big(\big|X_{t_1}\big|, \dots, \big|X_{t_n}\big|\big) \leq p^a\Big)  &= \prod_{1\leq j\leq N}P\big(\big|X_{t_1}\big| \leq p^a\cap \dots\cap \big|X_{t_n}\big| \leq p^a\big)\notag\\& = P\big(\big|X_{t_1}\big| \leq p^a\big)^N = P\big(\big|X_{\frac{T}{N}}\big| \leq p^a\big)^N.\end{align}
Define the function $B$ on the non-negative real numbers by \[B(t) = P\big(|X_{t}| \leq p^a\big).\]  The twice continuous differentiability in $t$ of $B$ implies that \[B\Big(\frac{T}{n}\Big) = 1 + \frac{B^\prime(0)T}{n} + O\Big(\frac{1}{n^2}\Big),\] and so \begin{align*}\lim_{n\to \infty} B\Big(\frac{T}{n}\Big)^n = {\rm e}^{TB^\prime(0)}.\end{align*} The right continuity of the sample paths together with \eqref{sec3:maxtoprodequality} and \eqref{sec3:prodeqa1} guarantees that
\begin{align}\label{sec3:expforB}P\Big(||X||_T\leq p^a\Big) &= \lim_{n\to \infty}P\Big(\max_{j} \big(\big|X_{\frac{T}{n}}\big|, \big|X_{\frac{2T}{n}}\big|, \dots, \big|X_{\frac{jT}{n}}\big|, \dots, \big|X_{T}\big|\big) \leq p^a\Big)\notag\\ &= \lim_{n\to \infty} B\Big(\frac{T}{n}\Big)^n\notag\\ &= {\rm e}^{TB^\prime(0)}.\end{align}

Denote respectively by $B_r$ and $S_r$ the sets $B_r(0)$ and $S_r(0)$ and by ${\mathds 1}_{r}$ the characteristic function on $B_r$.  A straightforward modification of the arguments in \cite{var97} shows that, for positive $t$, the density function $\rho(t,x)$ for the random variable $X_t$ satisfies the equality
\begin{align}\label{sec3:formulaforf}\rho(t,x) &= \sum_{r\in \mathds Z} \e^{-\sigma tp^rb}\int_{S_r} \mathds \chi(xy)\,{\rm d}y\notag\\ &= \sum_{r\in \mathds Z} \Big(\e^{-\sigma tp^{rb}} - \e^{-\sigma tp^{(r+1)b}}\Big)\int_{B_r} \mathds \chi(xy)\,{\rm d}y\notag\\& = \sum_{r\in\mathds Z}\Big(\e^{-\sigma t p^{rb}} - \e^{-\sigma  t p^{(r+1)b}}\Big)p^r{\mathds 1}_{-r}(x).\end{align}Use equation \eqref{sec3:formulaforf} for $\rho$ to obtain the equality \begin{align}\label{tbyt}B(t) &= P\big(|X_{t}| \leq p^a\big) \notag\\&= \int_{B_a} \rho(t,x)\,{\rm d}x\notag\\ &= \sum_{r\in \mathds Z} \Big(\e^{-\sigma tp^{rb}} - \e^{-\sigma tp^{(r+1)b}}\Big)p^r\int_{B_a} \mathds 1_{-r}(x)\,{\rm d}x\notag\\& = \e^{-\sigma tp^{-ab}} + \sum_{r\leq -a-1}p^{a+r}\Big(\e^{-\sigma tp^{rb}} - \e^{-\sigma tp^{(r+1)b}}\Big).\end{align} The partial sums of the infinite sum \eqref{tbyt} as well as the partial sums of the derivatives of the terms of \eqref{tbyt} converge uniformly to continuous functions of time, justifying a term by term differentiation of the infinite sum.  Term by term differentiation of \eqref{tbyt} implies that %
\begin{align}\label{sec3:Bprimeiszero}B^\prime(0) &= -\sigma p^{-ab} + \sigma \sum_{r\leq -a-1}p^{a+r}\Big(p^{(r+1)b} - p^{rb}\Big)\notag\\
& = -\sigma p^{-ab} + \sigma p^a\big(p^b - 1\big)\sum_{r\leq -a-1}p^{r(1+b)}\notag\\
&=-\sigma p^{-ab} + \sigma p^a\big(p^b - 1\big)p^{-(a+1)(1+b)}\cdot\dfrac{1}{1-p^{-(1+b)}}\notag\\
&= -\sigma p^{-ab}\Big(1 - \frac{p^b-1}{p^{1+b} - 1}\Big) = -\sigma \alpha p^{-ab}.\end{align} Equalities \eqref{sec3:expforB} and \eqref{sec3:Bprimeiszero} together imply that \[P\big(||X||_T\leq p^a\big) = \e^{-\sigma \alpha Tp^{-ab}}.\]
\end{proof}

A change of variables together with \eqref{conditionalProb} implies the following further result that for every non-negative real number $T$ and $x$ in $\mathds Q_p$, \begin{equation}\label{sec3:precor:eq}P_x\!\left(||X-x||_T\leq p^a\right) = {\rm e}^{-\sigma\alpha Tp^{-ab}}.\end{equation} Every point of a ball in $\mathds Q_p$ is the center of the ball, which implies the following corollary to Theorem~\ref{MainEstimate}.

\begin{corollary}
For every non-negative real number $T$, for any $x$ in $\mathds Q_p$ and for any $x^\prime$ in $B_a(x)$, \[P_x\!\left(||X-x^\prime||_T\leq p^a\right) = {\rm e}^{-\sigma\alpha Tp^{-ab}}.\]
\end{corollary}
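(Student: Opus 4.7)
The plan is to reduce the statement directly to equation \eqref{sec3:precor:eq} using the ultrametric property of $|\cdot|$ on $\mathds Q_p$. The event $\{||X - x'||_T \leq p^a\}$ is precisely the event that the path $X$ stays in the ball $B_a(x')$ throughout $[0,T]$, and similarly $\{||X - x||_T \leq p^a\}$ is the event that $X$ stays in $B_a(x)$. So it suffices to show these two events coincide.

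First I would invoke the non-Archimedean property of the absolute value: if $x' \in B_a(x)$, then for any $y \in \mathds Q_p$ one has $|y - x'| \leq \max(|y - x|, |x - x'|)$ and symmetrically $|y - x| \leq \max(|y - x'|, |x' - x|)$, with $|x - x'| \leq p^a$. Hence $|y - x| \leq p^a$ if and only if $|y - x'| \leq p^a$, i.e.\ $B_a(x) = B_a(x')$. This is the familiar fact that every point of a $p$-adic ball is a center.

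Applying this pointwise at every time $t \in [0,T]$ to $y = X_t(\omega)$, for any sample path $\omega \in D([0,T] \colon \mathds Q_p)$ we have $|X_t(\omega) - x'| \leq p^a$ for all $t \in [0,T]$ if and only if $|X_t(\omega) - x| \leq p^a$ for all $t \in [0,T]$. Taking the supremum over $t$ yields the equality of events
\[
\{||X - x'||_T \leq p^a\} = \{||X - x||_T \leq p^a\}
\]
as subsets of $D([0,T] \colon \mathds Q_p)$. Consequently the two events have the same $P_x$-probability, and by \eqref{sec3:precor:eq} this common value equals $\e^{-\sigma \alpha T p^{-ab}}$, which completes the proof.

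There is really no obstacle here — the result is a routine corollary whose content is entirely the self-centering property of non-Archimedean balls. The only mild care needed is to be explicit that $||\cdot||_T$ is a supremum of absolute values indexed by $t$, so that the pointwise equivalence of $|y - x| \leq p^a$ and $|y - x'| \leq p^a$ transfers cleanly to equality of the two supremum-level events without touching the underlying measure $P_x$ at all.
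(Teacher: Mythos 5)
Your proof is correct and follows exactly the route the paper intends: the paper derives \eqref{sec3:precor:eq} by a change of variables and then observes that ``every point of a ball in $\mathds Q_p$ is the center of the ball,'' which is precisely your identification $B_a(x) = B_a(x^\prime)$ and the resulting equality of the two exit events. You have merely written out the ultrametric argument the paper leaves implicit, so there is nothing to add.
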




\section{First Exit Probabilities for the Brownian Bridges}\label{four}

Suppose that $I$ is a time interval containing the positive real number $t$.  Denote by $D_{t, x, y}(I\colon \mathds Q_p)$ the set of all paths in the $\mathds Q_p$ valued Skorokhod space on $I$ that start at $x$ at time 0 and are at $y$ at time $t$, the set of $p$-adic brownian bridges with the given endpoints.  The measure $P_{t, x, y}$ is the measure $P_x$ conditioned so that $D_{t, x, y}(I\colon \mathds Q_p)$ has full measure.    Let $r$ henceforth denote an index variable that varies over the set of integers and let $a$ be an integer.  The probabilities associated to first exit times for $p$-adic brownian paths imply a similar inequality for the brownian bridges, in particular, that if $y$ is in $p^{-a}\mathds Z_p$ and $t$ is in $(0,T]$, then \[P_{t, x, y}\big(||X-x||_T\leq p^a\big) \ge P_x(||X-x||_T\leq p^a).\] The proof requires several calculations that the lemmata below present.  To compress notation, recall that $p^{-a}\mathds Z_p$ is the set $B_a$ and use the latter notation for a ball henceforth.

\begin{lemma}\label{6:lem:1} Suppose that $\rho$ is the fundamental solution to \eqref{eq1}.  For all positive $t$, 
\[\int_{{\Za}} \rho(t,x)\,{\rm d}x = p^a\int_{{\Zpa}} \e^{-\sigma t |x|^b}\,{\rm d}x.\]
\end{lemma}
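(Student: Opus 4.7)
The plan is to exploit the Fourier representation of $\rho$, interchange the order of integration via Fubini's theorem, and apply the standard identity for the Fourier transform of the characteristic function of a ball in $\mathds Q_p$.

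Using $\rho(t,\cdot) = \mathcal F^{-1}\!\bigl(\e^{-\sigma t|\cdot|^b}\bigr)$ together with the integral definition of $\mathcal F^{-1}$, the first step is to rewrite
\[
\int_{B_a} \rho(t,x)\,{\rm d}x = \int_{B_a}\!\int_{\mathds Q_p} \chi(xy)\,\e^{-\sigma t|y|^b}\,{\rm d}y\,{\rm d}x.
\]
For $t > 0$, the symbol $\e^{-\sigma t|y|^b}$ lies in $L^1(\mathds Q_p)$, $|\chi|\equiv 1$, and $B_a$ has finite Haar measure $p^a$, so the integrand is absolutely integrable on $B_a \times \mathds Q_p$ with respect to the product measure. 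Fubini's theorem therefore permits swapping the order of integration.

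Next, we compute the inner integral $\int_{B_a}\chi(xy)\,{\rm d}x$. If $y \in B_{-a}$, then $xy \in \mathds Z_p$ for every $x \in B_a$, so $\chi(xy) = 1$ and the integral equals $\mu(B_a) = p^a$. If $y \notin B_{-a}$, then $x \mapsto \chi(xy)$ is a nontrivial continuous character on the compact open subgroup $B_a$ of $\mathds Q_p$, which integrates to zero by character orthogonality. Hence $\int_{B_a}\chi(xy)\,{\rm d}x = p^a\,\mathds 1_{B_{-a}}(y)$. Substituting this identity back into the swapped double integral immediately produces
\[
\int_{B_a}\rho(t,x)\,{\rm d}x = p^a \int_{B_{-a}} \e^{-\sigma t|y|^b}\,{\rm d}y,
\]
which is the claim.

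The only real obstacle is verifying the hypotheses of Fubini's theorem and recalling the Fourier transform of $\mathds 1_{B_a}$, both of which are routine in $p$-adic harmonic analysis. An alternative route avoiding Fubini altogether would start from the series representation \eqref{sec3:formulaforf} for $\rho$, split the integral $\int_{B_a}\rho(t,x)\,{\rm d}x$ according to whether $B_a\subseteq B_{-r}$ or $B_{-r}\subseteq B_a$, telescope the resulting sum exactly as in the derivation of \eqref{tbyt}, and match the outcome term by term with the sphere-by-sphere expansion of $\int_{B_{-a}}\e^{-\sigma t|y|^b}\,{\rm d}y$.
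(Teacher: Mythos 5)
Your proof is correct, but it takes a genuinely different route from the paper's. You work directly from the definition $\rho(t,\cdot)=\mathcal F^{-1}\bigl(\e^{-\sigma t|\cdot|^b}\bigr)$, justify an interchange of integrals by Fubini (the justification you give is sound: $\e^{-\sigma t|y|^b}\in L^1(\mathds Q_p)$ for $t>0$, $|\chi|\equiv 1$, and $\mu(\Za)=p^a<\infty$), and then invoke the orthogonality identity $\int_{\Za}\chi(xy)\,{\rm d}x=p^a\,\mathds 1_{\Zpa}(y)$ to collapse the inner integral. The paper instead starts from the series representation \eqref{sec3:formulaforf}, integrates it term by term over $\Za$, splits the sum at $r=-a$, telescopes, and recognizes the result as the sphere-by-sphere expansion of $p^a\int_{\Zpa}\e^{-\sigma t|x|^b}\,{\rm d}x$ --- exactly the alternative you sketch in your closing sentence. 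Your argument is shorter and conceptually cleaner, and it leans on precisely the character-orthogonality identity that the paper itself uses later (in the proof of Proposition~\ref{6:prop:3}), so nothing new is being assumed; what the paper's computation buys is that it stays entirely within the explicit series framework already established in Section~\ref{three}, needs no interchange of an integral over all of $\mathds Q_p$ with one over $\Za$, and displays the telescoping structure that makes the two sides visibly equal circle by circle. Either proof is complete and acceptable.
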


\begin{proof} Integrate $\rho(t,\cdot)$ over $\Za$ using the simplified expression for $\rho$ given by \eqref{sec3:formulaforf} to obtain
\begin{align*}
\int_{{\Za}} \rho(t,x)\,{\rm d}x & =  \sum_{r\in\mathds Z}\Big(\e^{-\sigma t p^{rb}} - \e^{-\sigma t p^{(r+1)b}}\Big)p^r\int_{{\Za}}{\mathds 1}_{-r}(x)\,{\rm d}x%
\\& = \sum_{r\leq -a}\Big(\e^{-\sigma t p^{rb}} - \e^{-\sigma t p^{(r+1)b}}\Big)p^rp^a + \sum_{r > -a}\Big(\e^{-\sigma t p^{rb}} - \e^{-\sigma t p^{(r+1)b}}\Big)%
\\& = \e^{-\sigma t p^{-ab}} + \Big(\e^{-\sigma t p^{-(a+1)b}} - \e^{-\sigma t p^{-ab}}\Big)p^{-1} \\&\hspace{2.35in}+  \Big(\e^{-\sigma t p^{-(a+2)b}} - \e^{-\sigma t p^{-(a+1)b}}\Big)p^{-2}+ \cdots%
\\& = \e^{-\sigma t p^{-ab}}\big(1-p^{-1}\big) + \e^{-\sigma t p^{-(a+1)b}}p^{-1}\big(1-p^{-1}\big)  \\&\hspace{2.35in}+ \e^{-\sigma t p^{-(a+2)b}}p^{-2}\big(1-p^{-1}\big) + \cdots%
\\& = p^a\left(\int_{S_{-a}} \e^{-\sigma t |x|^b}\,{\rm d}x + \int_{S_{-a-1}} \e^{-\sigma t |x|^b}\,{\rm d}x + \int_{S_{-a-2}} \e^{-\sigma t |x|^b}\,{\rm d}x +\cdots\right)\\& = p^a\int_{{\Zpa}} \e^{-\sigma t |x|^b}\,{\rm d}x.%
\end{align*}
\end{proof}

\begin{lemma}\label{6:lem:2}
Suppose that $t$ and $t^\prime$ are both positive real numbers. For all $z$ in $\mathds Q_p$, 
\begin{align*}&\int_{\mathds Q_p}\chi(zy)\e^{-\sigma t^\prime |y|^b}\bigg(\int_{{\Za}}\e^{-\sigma t|y+w|^b}\,{\rm d}w\bigg)\,{\rm d}y  \\&\hspace{2.1in}> \int_{\mathds Q_p}\chi(zy)\e^{-\sigma(t+t^\prime) |y|^b}{\rm d}y \int_{{\Za}}\e^{-\sigma t|w|^b}\,{\rm d}w.\end{align*}
\end{lemma}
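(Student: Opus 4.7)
The plan is to first evaluate the inner integral using the strong triangle inequality, then convert the claim via Fourier duality into a statement about the fundamental solution $\rho$, and finally establish the resulting covariance positivity using the radial monotonicity of $\rho$ together with Lemma~\ref{6:lem:1}.

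The first step is to compute $\psi(y):=\int_{B_a}\e^{-\sigma t|y+w|^b}\,\mathrm{d}w$ via the substitution $u=y+w$. The shifted ball $y+B_a$ coincides with $B_a$ when $|y|\leq p^a$ and lies entirely in the single sphere $\{|u|=|y|\}$ when $|y|>p^a$, so
\[
\psi(y)=\begin{cases} C & |y|\leq p^a,\\ p^a\e^{-\sigma t|y|^b} & |y|>p^a,\end{cases}\qquad C:=\int_{B_a}\e^{-\sigma t|w|^b}\,\mathrm{d}w,
\]
and in particular $\psi=\mathds 1_{B_a}\ast \e^{-\sigma t|\cdot|^b}$. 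Applying the Fourier-convolution duality together with $\mathcal F^{-1}(\mathds 1_{B_a})=p^a\mathds 1_{B_{-a}}$ and $\mathcal F^{-1}(\e^{-\sigma s|\cdot|^b})=\rho(s,\cdot)$ gives $\mathcal F^{-1}(\psi)=p^a\mathds 1_{B_{-a}}\rho(t,\cdot)$, and hence the left-hand side of the lemma equals $p^a\int_{B_{-a}}\rho(t',z-u)\rho(t,u)\,\mathrm{d}u$. Combined with the identity $C=p^a\int_{B_{-a}}\rho(t,u)\,\mathrm{d}u$ obtained from Lemma~\ref{6:lem:1} with $a$ replaced by $-a$, the claim reduces, after division by $p^a$, to
\[
\int_{B_{-a}}\rho(t,u)\rho(t',z-u)\,\mathrm{d}u > \rho(t+t',z)\int_{B_{-a}}\rho(t,u)\,\mathrm{d}u.
\]

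Invoking the Chapman--Kolmogorov identity $\rho(t+t',z)=\int_{\mathds Q_p}\rho(t,v)\rho(t',z-v)\,\mathrm{d}v$ and antisymmetrizing the resulting double integral over $B_{-a}\times B_{-a}$, this is equivalent to
\[
\int_{B_{-a}}\int_{B_{-a}^c}\rho(t,u)\rho(t,v)\bigl[\rho(t',z-u)-\rho(t',z-v)\bigr]\,\mathrm{d}v\,\mathrm{d}u > 0.
\]
The bracket is controlled by the radial monotonicity of $\rho(t',\cdot)$ in $|\cdot|$, which follows by a term-by-term analysis of the explicit expansion~\eqref{sec3:formulaforf}. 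For $u\in B_{-a}$ the ultrametric inequality gives $|z-u|\leq\max(|z|,p^{-a})$, and for $v\in B_{-a}^c$ with $|v|\neq|z|$ it forces $|z-v|=\max(|z|,|v|)$; monotonicity then ensures the bracket is non-negative away from the critical sphere $\{|v|=|z|\}$.

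The main obstacle is the critical sphere, which is non-trivial only when $z\notin B_{-a}$. There the ultrametric does not pin down $|z-v|$, which can take any power of $p$ not exceeding $|z|$; a direct computation shows that the sub-shell $\{v\in S_{\log_p|z|}:|z-v|=p^r\}$ has Haar measure $p^r(1-p^{-1})$ for $r<\log_p|z|$ and $p^{\log_p|z|-1}(p-2)$ for $r=\log_p|z|$. Substituting the explicit formula~\eqref{sec3:formulaforf} for $\rho(t',\cdot)$ then reduces the inequality to a term-by-term comparison of two infinite series, and verifying strict positivity of this comparison---after careful bookkeeping of the contributions from the ball $B_{-a}$, the critical sphere $S_{\log_p|z|}$, and the spheres of radius exceeding $|z|$---is the technical crux of the argument.
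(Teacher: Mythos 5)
Your reduction is correct and, in fact, illuminating: computing $\psi(y)=\int_{B_a}\e^{-\sigma t|y+w|^b}\,{\rm d}w$ (constant on $B_a$, equal to $p^a\e^{-\sigma t|y|^b}$ outside) agrees with the computation that opens the paper's proof, and your Fourier--convolution step, combined with Lemma~\ref{6:lem:1} applied to $B_{-a}$, correctly converts the claimed inequality into
\[
\int_{B_{-a}}\rho(t,u)\rho(t^\prime,z-u)\,{\rm d}u \;>\; \rho(t+t^\prime,z)\int_{B_{-a}}\rho(t,u)\,{\rm d}u .
\]
But notice that this is exactly the statement of Proposition~\ref{6:prop:3} with $a$ replaced by $-a$, which the paper \emph{deduces from} Lemma~\ref{6:lem:2}. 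You have therefore shown the two statements are equivalent, and the burden of proof has not been discharged but merely relocated.

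The gap is in the final step. Your antisymmetrization to $\int_{B_{-a}}\int_{B_{-a}^c}\rho(t,u)\rho(t,v)\big[\rho(t^\prime,z-u)-\rho(t^\prime,z-v)\big]\,{\rm d}v\,{\rm d}u>0$ is algebraically correct, and radial monotonicity of $\rho(t^\prime,\cdot)$ does settle the case $z\in B_{-a}$. When $|z|>p^{-a}$, however, the critical sphere $\{|v|=|z|\}$ does not merely fail to be controlled by the ultrametric: its net contribution is \emph{strictly negative}. Indeed, with $p^k=|z|$, integrating $\rho(t^\prime,z-v)$ over $S_k$ gives $\int_{B_{k-1}}\rho(t^\prime,w)\,{\rm d}w+\rho(t^\prime,p^k)\,p^{k-1}(p-2)$, which exceeds $\rho(t^\prime,p^k)\mu(S_k)$ because $\rho(t^\prime,\cdot)>\rho(t^\prime,p^k)$ on $B_{k-1}$; so the bracket integrates to a negative quantity there, and the whole inequality hinges on showing that the positive contributions from the spheres $\{|v|>|z|\}$ dominate. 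That comparison of two infinite series is the entire analytic content of the lemma, and your proposal only names it as ``the technical crux'' without carrying it out; it is not a routine term-by-term verification. The paper avoids this difficulty altogether by staying on the Fourier side: after Abel summation every coefficient multiplies $\int_{B_r}\chi(zy)\,{\rm d}y=p^r\mathds 1_{B_{-r}}(z)\ge 0$, so the comparison reduces to the elementary facts that $p^{-a}\int_{B_a}\e^{-\sigma t|w|^b}\,{\rm d}w$ lies strictly between $\e^{-\sigma tp^{ab}}$ and $1$ and that $s\mapsto\e^{-sp^{rb}}-\e^{-sp^{(r+1)b}}$ is decreasing. To complete your argument you would need either to supply the missing series estimate or to switch to such a frequency-side comparison.
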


\begin{proof}
Decompose the integral over $\mathds Q_p$ into a sum of integrals over disjoint circles to obtain the equalities
\begin{align}\label{6:Eqlem:2}
&\int_{\mathds Q_p}\chi(zy)\e^{-\sigma t^\prime |y|^b}\bigg(\int_{{\Za}}\e^{-\sigma t|y+w|^b}\,{\rm d}w\bigg)\,{\rm d}y \notag\\&\qquad= \sum_{r\in \mathds Z}\e^{-\sigma t^\prime p^{rb}}\int_{S_r}\chi(zy)\bigg(\int_{{\Za}}\e^{-\sigma t|y+w|^b}\,{\rm d}w\bigg)\,{\rm d}y%
\notag\\&\qquad =\sum_{r> a}\e^{-\sigma t^\prime p^{rb}}\int_{S_r}\chi(zy)\bigg(\int_{{\Za}}\e^{-\sigma t|y+w|^b}\,{\rm d}w\bigg)\,{\rm d}y%
\notag\\&\qquad\qquad +\sum_{r\leq a}\e^{-\sigma t^\prime p^{rb}}\int_{S_r}\chi(zy)\bigg(\int_{{\Za}}\e^{-\sigma t|y+w|^b}\,{\rm d}w\bigg)\,{\rm d}y%
\notag\\&\qquad =\sum_{r>a}\e^{-\sigma t^\prime p^{rb}}\int_{S_r}\chi(zy) p^a\e^{-\sigma t p^{rb}}\,{\rm d}y \notag\\&\hspace{2in}+\sum_{r\leq a}\e^{-\sigma t^\prime p^{rb}}\int_{S_r}\chi(zy)\,{\rm d}y\int_{{\Za}}\e^{-\sigma t|w|^b}\,{\rm d}w%
\notag\\&\qquad =\sum_{r>a}\e^{-(t + t^\prime)\sigma p^{rb}}\int_{S_r}\chi(zy)\,{\rm d}y \notag\\&\hspace{2in}+\sum_{r\leq a}\e^{-\sigma t^\prime p^{rb}}\int_{{\Za}}\e^{-\sigma t|w|^b}\,{\rm d}w\int_{S_r}\chi(zy)\,{\rm d}y%
\notag\\&\qquad =p^a\sum_{r>a}\e^{-(t + t^\prime)\sigma p^{rb}}\bigg(\int_{B_r}\chi(zy)\,{\rm d}y - \int_{B_{r-1}}\chi(zy)\,{\rm d}y\bigg)%
\notag\\&\hspace{1in} +\sum_{r\leq a}\e^{-\sigma t^\prime p^{rb}}\int_{{\Za}}\e^{-\sigma t|w|^b}\,{\rm d}w\bigg(\int_{B_r}\chi(zy)\,{\rm d}y - \int_{B_{r-1}}\chi(zy)\,{\rm d}y\bigg) %
\notag\\&\qquad =p^a\sum_{r>a}\Big(\e^{-\sigma(t + t^\prime) p^{rb}}-\e^{-\sigma(t + t^\prime) p^{(r+1)b}}\Big)\int_{B_r}\chi(zy)\,{\rm d}y - p^a\e^{-\sigma(t + t^\prime) p^{ab}}\int_{B_{a}}\chi(zy)\,{\rm d}y%
\notag\\&\hspace{1in}+\sum_{r\leq a}\bigg(\e^{-\sigma t^\prime p^{rb}} -\e^{-\sigma t^\prime p^{(r+1)b}}\bigg) \int_{B_r}\chi(zy)\,{\rm d}y\int_{{\Za}}\e^{-\sigma t|w|^b}\,{\rm d}w%
\notag\\&\hspace{2in} + \e^{-\sigma t^\prime p^{ab}}\int_{B_a}\chi(zy)\,{\rm d}y\int_{{\Za}}\e^{-\sigma t|w|^b}\,{\rm d}w = \ast.%
\end{align}
The estimate \[\e^{-\sigma t p^{ab}}< p^{-a}\int_{{\Za}}\e^{-\sigma t|w|^b}\,{\rm d}w < 1,\] and \eqref{6:Eqlem:2} together imply that
\begin{align*}
\ast & >\sum_{r>a}\Big(\e^{-\sigma(t + t^\prime) p^{rb}}-\e^{-\sigma(t + t^\prime) p^{(r+1)b}}\Big)\int_{B_r}\chi(zy)\,{\rm d}y\int_{{\Za}}\e^{-\sigma t|w|^b}\,{\rm d}w%
\\&\qquad\qquad +\sum_{r\leq a}\bigg(\e^{-\sigma t^\prime p^{rb}} -\e^{-\sigma t^\prime p^{(r+1)b}}\bigg) \int_{B_r}\chi(zy)\,{\rm d}y\int_{{\Za}}\e^{-\sigma t|w|^b}\,{\rm d}w.%
\end{align*}
For any $r$ in $\mathds Z$,%
\begin{align*}
g(s) = \int_{p^{rb}}^{p^{(r+1)b}} \e^{-sx}\,{\rm d}x = \e^{-sp^{rb}} - \e^{-sp^{(r+1)b}}
\end{align*}
is decreasing in the variable $s$ since 
\begin{align*}
g^\prime(s) = \int_{p^{rb}}^{p^{(r+1)b}} -x\e^{-sx}\,{\rm d}x <0.
\end{align*}
The fact that $g$ is decreasing in $s$ implies that        
\begin{align*}
\ast &> \sum_{r\in {\mathds Z}}\(\e^{-\sigma(t + t^\prime) p^{rb}}-\e^{-\sigma(t + t^\prime) p^{(r+1)b}}\)\int_{B_r}\chi(zy)\,{\rm d}y\int_{{\Za}}\e^{-\sigma t|w|^b}\,{\rm d}w
\\&\qquad = \sum_{r\in {\mathds Z}} \e^{-\sigma(t + t^\prime) p^{rb}}\int_{S_r}\chi(zy)\,{\rm d}y\int_{{\Za}}\e^{-\sigma t|w|^b}\,{\rm d}w
\\&\qquad = \sum_{r\in {\mathds Z}} \int_{S_r}\chi(zy)\e^{-\sigma(t + t^\prime) |y|^b}\,{\rm d}y \int_{{\Za}}\e^{-\sigma t|w|^b}\,{\rm d}w
\\&\qquad = \int_{\mathds Q_p}\chi(zy)\e^{-\sigma(t + t^\prime) |y|^b}\,{\rm d}y \int_{{\Za}}\e^{-\sigma t|w|^b}\,{\rm d}w.
\end{align*}

\end{proof}

\begin{proposition}\label{6:prop:3}
Suppose that $\rho$ is the fundamental solution to \eqref{eq1}.  For all positive real numbers $t$ and $t^\prime$ and all $z$ in $\mathds Q_p$, 
\[\int_{{\Za}}\rho(t,x)\rho(t^\prime,z-x)\,{\rm d}x > \rho(t+t^\prime, z)\int_{{\Za}}\rho(t,x)\,{\rm d}x.\]
\end{proposition}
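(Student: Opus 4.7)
The plan is to unfold both sides of the asserted inequality via the Fourier representation
\[\rho(t,x) = \int_{\mathds Q_p}\chi(xy)\e^{-\sigma t|y|^b}\,{\rm d}y\]
of the fundamental solution and reduce to Lemma~\ref{6:lem:2}. First I would substitute this representation for $\rho(t',z-x)$ in the left-hand side and apply Fubini to obtain
\[\int_{{\Za}}\rho(t,x)\rho(t',z-x)\,{\rm d}x = \int_{\mathds Q_p}\chi(zy)\e^{-\sigma t'|y|^b}\left(\int_{{\Za}}\rho(t,x)\chi(-xy)\,{\rm d}x\right){\rm d}y.\]
Inserting the Fourier formula for $\rho(t,x)$ into the inner integral and using the elementary duality identity $\int_{{\Za}}\chi(cx)\,{\rm d}x = p^a\mathds 1_{{\Zpa}}(c)$, followed by the substitution $w = y + v$, transforms the inner integral into $p^a\int_{{\Zpa}}\e^{-\sigma t|y+v|^b}\,{\rm d}v$. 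Therefore
\[\int_{{\Za}}\rho(t,x)\rho(t',z-x)\,{\rm d}x = p^a\int_{\mathds Q_p}\chi(zy)\e^{-\sigma t'|y|^b}\int_{{\Zpa}}\e^{-\sigma t|y+v|^b}\,{\rm d}v\,{\rm d}y.\]

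For the right-hand side I would apply Lemma~\ref{6:lem:1} to write $\int_{{\Za}}\rho(t,x)\,{\rm d}x = p^a\int_{{\Zpa}}\e^{-\sigma t|x|^b}\,{\rm d}x$ and use the Fourier formula for $\rho(t+t',z)$ to recast the right-hand side as
\[p^a\int_{\mathds Q_p}\chi(zy)\e^{-\sigma(t+t')|y|^b}\,{\rm d}y\int_{{\Zpa}}\e^{-\sigma t|x|^b}\,{\rm d}x.\]
After cancelling the common factor $p^a$, the inequality to be proved is precisely the conclusion of Lemma~\ref{6:lem:2}, but with the ball $\Za$ everywhere replaced by $\Zpa$. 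The proof of Lemma~\ref{6:lem:2} uses nothing about the ball beyond its being labelled by an integer: the splitting of frequencies into $r>a$ and $r\leq a$ and the strict monotonicity argument for $g(s)=\e^{-sp^{rb}}-\e^{-sp^{(r+1)b}}$ go through for any integer exponent. Applying that same argument with $-a$ in place of $a$ yields the strict inequality.

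The main obstacle is the bookkeeping in the Fourier reduction: truncation of the $x$-integration to $\Za$ produces, through Pontryagin duality, an integration over the dual ball $\Zpa$ in the frequency variable $y$, so Lemma~\ref{6:lem:2} must be invoked at the dual integer $-a$ rather than at $a$. Recognizing this dualization is the only substantive step; the remaining manipulations are Fubini, change of variable, and the explicit identity $\int_{{\Za}}\chi(cx)\,{\rm d}x = p^a\mathds 1_{{\Zpa}}(c)$. Strictness in the conclusion is inherited directly from the strictness in Lemma~\ref{6:lem:2}.
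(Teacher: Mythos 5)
Your proposal is correct and follows essentially the same route as the paper: Fourier representation of both factors, Fubini, the duality identity $\int_{\Za}\chi(cx)\,{\rm d}x = p^a\mathds 1_{\Zpa}(c)$, then Lemma~\ref{6:lem:2} applied at the dual integer $-a$ and Lemma~\ref{6:lem:1} to identify the right-hand side. Your explicit remark that Lemma~\ref{6:lem:2} must be invoked with $\Zpa$ in place of $\Za$ (valid since the lemma's argument works for any integer index) is exactly the step the paper performs, just stated there without comment.
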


\begin{proof}
Write both $\rho(t,x)$ and $\rho(t^\prime,z-x)$ as the Fourier transforms of exponential functions and rearrange terms to obtain the equalities
\begin{align}\label{sec3:intermediateineqtplustprime}
&\int_{{\Za}}\rho(t,x)\rho(t^\prime,z-x)\,{\rm d}x\notag%
\\&\qquad\qquad= \int_{{\Za}}\bigg(\int_{\mathds Q_p}\chi(xw)\e^{-\sigma t |w|^b}\,{\rm d}w\bigg)\bigg(\int_{\mathds Q_p}\chi((z-x)y)\e^{-\sigma t^\prime |y|^b}\,{\rm d}y\bigg)\,{\rm d}x\notag%
\\&\qquad\qquad= \int_{{\Za}}\bigg(\int_{\mathds Q_p}\int_{\mathds Q_p} \chi(xw)\chi((z-x)y)\e^{-\sigma t |w|^b}\e^{-\sigma t^\prime |y|^b}\,{\rm d}w\,{\rm d}y\bigg)\,{\rm d}x\notag%
\\&\qquad\qquad= \int_{\mathds Q_p}\int_{\mathds Q_p} \bigg(\int_{{\Za}} \chi(xw)\chi((z-x)y)\e^{-\sigma t |w|^b}\e^{-\sigma t^\prime |y|^b}\,{\rm d}x\bigg)\,{\rm d}w\,{\rm d}y\notag%
\\&\qquad\qquad= \int_{\mathds Q_p}\int_{\mathds Q_p} \bigg(\int_{{\Za}} \chi(xw)\chi(zy)\chi(-xy)\e^{-\sigma t |w|^b}\e^{-\sigma t^\prime |y|^b}\,{\rm d}x\bigg)\,{\rm d}w\,{\rm d}y\notag%
\\&\qquad\qquad= \int_{\mathds Q_p}\int_{\mathds Q_p} \bigg(\int_{{\Za}} \chi(zy)\chi(x(w-y))\e^{-\sigma t |w|^b}\e^{-\sigma t^\prime |y|^b}\,{\rm d}x\bigg)\,{\rm d}w\,{\rm d}y\notag%
\\&\qquad\qquad= \int_{\mathds Q_p}\int_{\mathds Q_p} \chi(zy)\bigg(\int_{{\Za}} \chi(x(w-y))\,{\rm d}x\bigg)\e^{-\sigma t |w|^b}\e^{-\sigma t^\prime |y|^b}\,{\rm d}w\,{\rm d}y.%
\end{align}
The equality \[\int_{{\Za}}\chi(xa)\,{\rm d}x = p^a{\mathds 1_{{\Zpa}}}(a)\] together with \eqref{sec3:intermediateineqtplustprime} implies that 
\begin{align*}
&\int_{ {\Zpa}}\rho(t,x)\rho(t^\prime,z-x)\,{\rm d}x \\&\hspace{1in}= \int_{\mathds Q_p}\int_{\mathds Q_p}\chi(zy){p^{a}\mathds 1}_{{\Zpa}}(w-y)\e^{-\sigma t |w|^b}\e^{-\sigma t^\prime |y|^b}\,{\rm d}w\,{\rm d}y%
\\&\hspace{1in} = p^a\int_{\mathds Q_p}\int_{{\Zpa}}\chi(zy)\e^{-\sigma t |y+w|^b}\e^{-\sigma t^\prime |y|^b}\,{\rm d}w\,{\rm d}y%
\\&\hspace{1in} = p^a\int_{\mathds Q_p}\chi(zy)\e^{-\sigma t^\prime |y|^b}\bigg(\int_{{\Zpa}}\e^{-\sigma t |y+w|^b}\,{\rm d}w\bigg)\,{\rm d}y%
\\&\hspace{1in} > \int_{\mathds Q_p}\chi(zy)\e^{-\sigma(t+t^\prime) |y|^b}\,{\rm d}y \cdot p^a\int_{{\Zpa}}\e^{-\sigma t |w|^b}\,{\rm d}w%
\\&\hspace{1in} =  \rho\big(t+t^\prime, z\big)  p^a\int_{{\Zpa}}\e^{-\sigma t |w|^b}\,{\rm d}w = \rho\big(t+t^\prime, z\big) \int_{{\Za}}\rho(t, x)\,{\rm d}x%
\end{align*}
where Lemma~\ref{6:lem:2} implies the inequality and Lemma~\ref{6:lem:1} implies the ultimate equality.

\end{proof}

\begin{corollary}\label{6:cor:3}
Suppose that $\rho$ is the fundamental solution to \eqref{eq1}.  For all positive real numbers $t$ and $t^\prime$ and all $z$ in ${\Za}$, \[\int_{{\Za}}\rho(t,x)\rho(t^\prime,z-x)\,{\rm d}x > \rho(t+t^\prime, z)\int_{{\Za}}\rho(t^\prime,x)\,{\rm d}x.\]
\end{corollary}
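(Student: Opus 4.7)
The plan is to deduce the corollary from Proposition~\ref{6:prop:3} by exploiting the symmetry between $t$ and $t'$, using the additional hypothesis $z \in B_a$ to justify a change of variables. The two statements differ only in whether the right-hand side carries $\int_{B_a}\rho(t,x)\,{\rm d}x$ or $\int_{B_a}\rho(t',x)\,{\rm d}x$; swapping $t$ and $t'$ in the proposition supplies the required factor, provided that the left-hand integral can be shown to be invariant under such a swap.

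I would first substitute $y = z - x$ in the integral on the left. Because $B_a = p^{-a}\mathds Z_p$ is an additive subgroup of $\mathds Q_p$ and $z$ lies in $B_a$, the translate $z - B_a$ coincides with $B_a$, so translation invariance of the Haar measure gives
\[
\int_{B_a}\rho(t,x)\rho(t',z-x)\,{\rm d}x = \int_{B_a}\rho(t',y)\rho(t,z-y)\,{\rm d}y.
\]
I would then apply Proposition~\ref{6:prop:3} to the integral on the right with the roles of $t$ and $t'$ interchanged, obtaining
\[
\int_{B_a}\rho(t',y)\rho(t,z-y)\,{\rm d}y > \rho(t+t',z)\int_{B_a}\rho(t',x)\,{\rm d}x,
\]
and chaining this inequality with the previous equality produces the corollary.

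The only point requiring attention is the verification that $y \mapsto z - y$ is a measure-preserving bijection of $B_a$ with itself, which holds precisely because $z \in B_a$ and $B_a$ is an additive subgroup of $\mathds Q_p$. Beyond this elementary observation, the argument is simply a repackaging of Proposition~\ref{6:prop:3} under the symmetry $t \leftrightarrow t'$, so I do not anticipate any serious obstacle.
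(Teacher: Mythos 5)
Your proposal is correct and follows essentially the same route as the paper: a change of variables $u = z - x$ (valid because $z \in B_a$ and $B_a$ is an additive subgroup, so the domain is preserved), followed by an application of Proposition~\ref{6:prop:3} with the roles of $t$ and $t'$ interchanged. You make explicit the measure-preserving bijection point that the paper leaves implicit, but the argument is the same.
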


\begin{proof}
Use a change of variables to obtain the equality
\[\int_{{\Za}}\rho(t,x)\rho(t^\prime,z-x)\,{\rm d}x = \int_{{\Za}}\rho(t,z-u)\rho(t^\prime,u)\,{\rm d}u.\]  The result then follows immediately from Proposition~\ref{6:prop:3}.
\end{proof}

\begin{proposition}\label{6:prop:4}
Suppose that $m$ is a natural number. Suppose that $y$ is in $B_a$ and $n$ is a natural number large enough so that $B_{-n}(y)$ is a subset of $B_a$.  If for each $i$ in $\{1, \dots, m+1\}$ the real number $t_i$ is positive, then
\begin{align*}&\int_{{\Za}}\cdots\int_{{\Za}}\cdot\int_{B_{-n}(y)} \rho(t_1, z_1)\rho(t_2, z_2-z_1)\\&\hspace{.5in}\cdots \rho(t_m, z_m-z_{m-1})\rho(t_{m+1}, z-z_m)\,{\rm d}z\,{\rm d}z_m\,\cdots \,{\rm d}z_1 \\&\hspace{.9in}> \int_{B_{-n}(y)} \rho(t_1 + \cdots + t_m + t_{m+1}, z)\,{\rm d}z \cdot \prod_{1\leq i\leq m} \int_{{\Za}} \rho(t_i, z_i)\,{\rm d}z_i.\end{align*}
\end{proposition}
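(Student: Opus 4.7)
The plan is to argue by induction on $m$, using Proposition~\ref{6:prop:3} to peel off one convolution at each end of the inductive step and exploiting the subgroup structure of $B_a$ to handle an intermediate translation. For the base case $m=1$, swapping the order of integration with Fubini reduces the claim to the pointwise inequality
\[\int_{B_a}\rho(t_1,z_1)\rho(t_2,z-z_1)\,{\rm d}z_1>\rho(t_1+t_2,z)\int_{B_a}\rho(t_1,z_1)\,{\rm d}z_1\]
for each $z\in B_{-n}(y)$, which is exactly Proposition~\ref{6:prop:3}; integrating in $z$ over $B_{-n}(y)$ and using that $\rho(t,\cdot)$ is strictly positive (visible from \eqref{sec3:formulaforf}) preserves the strict inequality.

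For the inductive step, I hold the outermost variable $z_1\in B_a$ fixed and denote by $f(z_1)$ the inner $m$-fold integral over $z_2,\ldots,z_m,z$. The change of variables $z_i'=z_i-z_1$ for $2\leq i\leq m$ and $z'=z-z_1$ leaves every intermediate domain $B_a$ invariant because $B_a$ is an additive subgroup, and it replaces $B_{-n}(y)$ by $B_{-n}(y-z_1)$. Since $y-z_1\in B_a$ and the containment $B_{-n}(\cdot)\subseteq B_a$ depends only on the numerical condition $-n\leq a$, the translated ball still sits inside $B_a$. Hence $f(z_1)$ is precisely the left-hand side of the proposition with $m$ decreased by one, $y$ replaced by $y-z_1$, and exponents $t_2,\ldots,t_{m+1}$ in place of $t_1,\ldots,t_m$, so the inductive hypothesis yields
\[f(z_1)>\Big(\prod_{i=2}^{m}\int_{B_a}\rho(t_i,z_i)\,{\rm d}z_i\Big)\int_{B_{-n}(y-z_1)}\rho(t_2+\cdots+t_{m+1},z')\,{\rm d}z'.\]

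Multiplying this bound by $\rho(t_1,z_1)>0$, integrating over $z_1\in B_a$, and reverting the translation so that the small-ball integral becomes $\int_{B_{-n}(y)}\rho(t_2+\cdots+t_{m+1},z-z_1)\,{\rm d}z$ reduces the remaining task to showing
\[\int_{B_{-n}(y)}\int_{B_a}\rho(t_1,z_1)\rho(t_2+\cdots+t_{m+1},z-z_1)\,{\rm d}z_1\,{\rm d}z>\int_{B_{-n}(y)}\rho(t_1+\cdots+t_{m+1},z)\,{\rm d}z\cdot\int_{B_a}\rho(t_1,z_1)\,{\rm d}z_1,\]
which is the base case with $t_2$ replaced by the sum $t_2+\cdots+t_{m+1}$ and which I close with a second application of Proposition~\ref{6:prop:3}. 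The main obstacle is not any delicate analytic estimate but the bookkeeping surrounding the translation $y\mapsto y-z_1$: I need $B_{-n}(y-z_1)\subseteq B_a$ for every $z_1\in B_a$ so that the induction hypothesis remains applicable, and I need each successive integration to preserve the strict inequality. The first point is handled by the subgroup structure of $B_a$ and the observation that the admissible range of $n$ depends only on $a$, not on the centre; the second by the strict positivity of $\rho(t,\cdot)$.
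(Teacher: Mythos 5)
Your proof is correct and takes essentially the same route as the paper: induction on $m$, with Proposition~\ref{6:prop:3} collapsing one convolution factor per step and strict positivity of $\rho$ preserving the strict inequality under integration. The only difference is bookkeeping --- you peel off the first increment by fixing $z_1$ and translating the target ball to $B_{-n}(y-z_1)$ (legitimate, since $y-z_1\in B_a$ and the containment $B_{-n}(\cdot)\subseteq B_a$ depends only on $-n\le a$), whereas the paper passes to increment variables $w_i=z_i-z_{i-1}$ and peels off the last increment via the corollary to Proposition~\ref{6:prop:3}; both reduce to the same two-factor inequality.
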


\begin{proof}
If $m$ is equal to 1, then the corollary to Proposition~\ref{6:prop:3} implies inequality
\begin{align}\label{6:lem:3:a}\int_{{\Za}}\rho(t_2,z_1)\rho(t_1,z-z_1)\,{\rm d}x > \rho(t_1 + t_2, z)\int_{{\Za}}\rho(t_2,z_1)\,{\rm d}z_1.\end{align} Integrate both sides of \eqref{6:lem:3:a} to obtain, \begin{align}\label{6:eqlem:3:a}&\int_{B_{-n}(y)}\int_{{\Za}}\rho(t_2,z_1)\rho(t_1,z-z_1)\,{\rm d}z_1\,{\rm d}z  \notag\\&\hspace{1in}> \int_{B_{-n}(y)} \rho(t_1 + t_2, z)\int_{{\Za}}\rho(t_1,z_1)\,{\rm d}z_1\,{\rm d}z\notag\\& \hspace{1in}= \int_{B_{-n}(y)} \rho(t_1 + t_2, z) \,{\rm d}z \int_{{\Za}}\rho(t_1,z_1)\,{\rm d}z_1.\end{align}  Switch the order of integration on the left hand side of \eqref{6:eqlem:3:a} to verify the proposition when $m$ is equal to 1.

Let $z_0$ be equal to 0 and let $\ell$ be an arbitrary natural number.  Suppose that for any epoch $(t_1, \dots, t_{\ell})$ of length $\ell$,
\begin{align*}&\int_{{\Za}}\cdots\int_{{\Za}}\cdot\int_{B_{-n}(y)} \rho(t_{\ell+1}, z-z_\ell)\prod_{i\leq 1\leq \ell} \rho(t_i, z_i-z_{i-1})\,{\rm d}z\,{\rm d}z_\ell\,\cdots \,{\rm d}z_1 \\&\hspace{1in}> \int_{B_{-n}(y)} \rho\bigg(t_{\ell+1} + \sum_{1\leq k\leq \ell } t_k, z\bigg)\,{\rm d}z \cdot \prod_{1\leq i\leq \ell} \int_{{\Za}} \rho(t_i, z_i)\,{\rm d}z_i.\end{align*}  Suppose that $(t_1, \dots, t_{\ell}, t_{\ell+1})$ is an epoch of length $\ell+1$.  Change variables by taking \[w_i =  z_i - z_{i-1}\] to obtain the equality
\begin{align*}
&\int_{{\Za}}\cdots\int_{{\Za}}\int_{{\Za}}\cdot\int_{B_{-n}(y)} \rho(t_{\ell+2}, z-z_{\ell+1})\\&\hspace{1.75in}\cdot\prod_{i\leq 1\leq \ell+1} \rho(t_i, z_i-z_{i-1})\,{\rm d}z\,{\rm d}z_{\ell+1}\,{\rm d}z_\ell\,\cdots \,{\rm d}z_1 
\\&\hspace{.5in} = \int_{{\Za}}\cdots\int_{{\Za}}\int_{{\Za}}\cdot\int_{B_{-n}(y)} \rho(t_{\ell+2}, z-(w_1 + \cdots + w_\ell+ w_{\ell+1}))\\&\hspace{1.75in}\cdot\prod_{i\leq 1\leq \ell+1} \rho(t_i, w_i)\,{\rm d}z\,{\rm d}w_{\ell+1}\,{\rm d}w_\ell\,\cdots \,{\rm d}w_1 = \ast_1.
\end{align*}
Rearrange terms and then use the corollary to Proposition~\ref{6:prop:3} to obtain the inequality
\begin{align*}
\ast_1 & = \int_{{\Za}}\cdots\int_{{\Za}} \Bigg(\int_{{\Za}}\cdot\int_{B_{-n}(y)} \rho(t_{\ell+2}, (z-w_1 - \cdots - w_\ell)- w_{\ell+1})\\&\hspace{1.4in}\cdot\rho(t_{\ell+1}, w_{\ell+1})\,{\rm d}z\,{\rm d}w_{\ell+1}\Bigg)\prod_{i\leq 1\leq \ell} \rho(t_i, w_i)\,{\rm d}w_\ell\,\cdots \,{\rm d}w_1 
\\& > \int_{{\Za}}\cdots\int_{{\Za}}\Bigg(\int_{B_{-n}(y)} \rho(t_{\ell+2}+t_{\ell+1}, (z-w_1 - \cdots - w_\ell)) \,{\rm d}z \\&\hspace{1in}\cdot\int_{{\Za}}\rho(t_{\ell+1},w_{l+1})\,{\rm d}w_{l+1}\Bigg)
\prod_{i\leq 1\leq \ell} \rho(t_i, w_i)\,{\rm d}w_\ell\,\cdots \,{\rm d}w_1 = \ast_2.
\end{align*}
Rearrange terms and then use the inductive hypothesis to obtain the inequality
\begin{align*}
\ast_2  &= \int_{{\Za}}\cdots\int_{{\Za}} \Bigg(\int_{B_{-n}(y)} \rho(t_{\ell+2} + t_{\ell+1}, z-z_\ell) \,{\rm d}z\\&\hspace{1.5in}\cdot \prod_{i\leq 1\leq \ell} \rho(t_i, z_i-z_{i-1})\,{\rm d}w_\ell \Bigg)\int_{{\Za}}\rho(t_{\ell+1},z_{l+1})\,{\rm d}z_{l+1}
\\& > \int_{B_{-n}(y)} \rho\bigg(t_{\ell+2} + t_{\ell+1} + \sum_{1\leq i\leq \ell} t_i, z\bigg)\,{\rm d}z\\& \hspace{1.5in}\cdot\prod_{1\leq i\leq \ell} \int_{{\Za}} \rho(t_i, z_i)\,{\rm d}z_i\int_{{\Za}}\rho(t_{\ell+1},z_{l+1})\,{\rm d}z_{l+1}%
\\& = \int_{B_{-n}(y)} \rho\bigg(\sum_{1\leq i\leq \ell+2} t_i, z\bigg)\,{\rm d}z \cdot \prod_{1\leq i\leq \ell+1} \int_{ {\Za}} \rho(t_i, z_i)\,{\rm d}z_i.
\end{align*}
Fubini's Theorem therefore implies that \begin{align*}&\int_{{\Za}}\cdots\int_{{\Za}}\cdot\int_{B_{-n}(y)} \rho(t_{\ell+2}, z-z_{\ell+1})\prod_{i\leq 1\leq \ell} \rho(t_i, z_i-z_{i-1})\,{\rm d}z\,{\rm d}z_\ell\,\cdots \,{\rm d}z_1 \\&\hspace{1in}> \int_{B_{-n}(y)} \rho\bigg(\sum_{1\leq k\leq {\ell+2} } t_k, z\bigg)\,{\rm d}z \cdot \prod_{1\leq i\leq {\ell+1}} \int_{{\Za}} \rho(t_i, z_i)\,{\rm d}z_i,\end{align*} and so the axiom of induction implies the proposition.
\end{proof}

\begin{corollary}\label{6:cor:4}
Suppose that $m$ is a natural number.   For all $y$ in $B_a$, if $n$ is a natural number large enough so that $B_{-n}(y)$ is a subset of $B_a$, then
\begin{align*}&P\Big(X_t\in B_{-n}(y)\;{\Big|}\;X_{\frac{t}{m}}\in {\Za}\cap \cdots \cap X_{\frac{(m-1)t}{m}}\in {\Za} \cap X_t\in {\Za}\Big) \\&\hspace{3.5in}> P\big(X_t\in B_{-n}(y)\big).\end{align*}
\end{corollary}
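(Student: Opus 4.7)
The plan is to express the conditional probability as a ratio of two path-space integrals, bound the numerator with Proposition~\ref{6:prop:4}, and evaluate the denominator by the non-Archimedean device from the proof of Theorem~\ref{MainEstimate}.

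First, let $A = \{X_t \in B_{-n}(y)\}$ and $B = \{X_{t/m} \in {\Za} \cap \cdots \cap X_t \in {\Za}\}$. Since $B_{-n}(y)$ is contained in ${\Za}$ by hypothesis, the event $A \cap B$ simply drops the final $B_a$-constraint at time $t$, so by \eqref{meas} the numerator $P(A \cap B)$ is the iterated integral of a product of $m$ transition densities, all with time step $t/m$, with $m-1$ integrals over ${\Za}$ and one over $B_{-n}(y)$. Applying Proposition~\ref{6:prop:4} with its parameter $m$ replaced by $m-1$ and $t_i = t/m$ for every $i$ yields the strict lower bound
\[P(A \cap B) > \int_{B_{-n}(y)} \rho(t, z)\, {\rm d}z \cdot \left(\int_{{\Za}} \rho(t/m, x)\, {\rm d}x\right)^{m-1}.\]

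For the denominator, I would invoke the non-Archimedean identity already used in \eqref{sec3:maxtoprodequality} to rewrite $B$ as the event that each of the $m$ increments $X_{jt/m} - X_{(j-1)t/m}$ (with $X_0 = 0$) lies in ${\Za}$; independence and stationarity of the increments, as in \eqref{sec3:prodeqa1}, then collapse $P(B)$ to $\bigl(\int_{{\Za}} \rho(t/m, x)\, {\rm d}x\bigr)^m$. Dividing cancels $m-1$ copies of $\int_{{\Za}} \rho(t/m, x)\, {\rm d}x$ from the denominator, leaving
\[P(A \mid B) > \frac{\int_{B_{-n}(y)} \rho(t, z)\, {\rm d}z}{\int_{{\Za}} \rho(t/m, x)\, {\rm d}x} \geq \int_{B_{-n}(y)} \rho(t, z)\, {\rm d}z = P(A),\]
since $\rho(t/m, \cdot)$ is a probability density and hence the remaining denominator is at most $1$; the strict inequality is inherited from Proposition~\ref{6:prop:4}.

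I do not anticipate a genuine obstacle: Proposition~\ref{6:prop:4} is engineered exactly for the numerator, and the factorization of $P(B)$ is immediate from the ultrametric inequality together with the independent-increments property. The only real care needed is index bookkeeping, since Proposition~\ref{6:prop:4} uses $m+1$ densities for its parameter $m$, while the corollary's conditioning event carves the interval $[0,t]$ into $m$ equal steps, so the proposition's parameter must be taken to be $m-1$. The degenerate case $m=1$ is not covered by this reduction but follows at once from $P(A \mid B) = P(A)/P(B) > P(A)$, since $P(B) < 1$.
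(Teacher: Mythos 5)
Your proposal is correct and follows essentially the same route as the paper: write the conditional probability as a ratio, bound the numerator by Proposition~\ref{6:prop:4}, and factor the denominator into $\bigl(\int_{\Za}\rho(t/m,x)\,{\rm d}x\bigr)^m$ via ultrametricity and independence of increments. Your index bookkeeping (taking the proposition's parameter to be $m-1$, so that one factor of $\int_{\Za}\rho(t/m,x)\,{\rm d}x\le 1$ survives in the denominator and only strengthens the inequality) is in fact the careful version of the paper's displayed cancellation, and your separate treatment of the degenerate case $m=1$ is a sensible addition.
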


\begin{proof}

Ultrametricity of the valuation on $\mathds Q_p$ implies that \begin{align*}&P\Big(X_{\frac{t}{m}}\in {\Za}\cap \cdots \cap X_{t}\in {\Za}\Big) \\&\hspace{1in}= P\Big(X_{\frac{t}{m}} - X_0\in {\Za}\cap \cdots \cap X_{t} - X_{\frac{(m-1)t}{m}}\in {\Za}\Big).\end{align*} Therefore, 
\begin{align*}
&P\Big(X_t\in B_{-n}(y)\;{\Big|}\;X_{\frac{t}{m}}\in {\Za}\cap \cdots \cap X_{\frac{(m-1)t}{m}}\in {\Za}\Big)\\
& \hspace{1in}= \dfrac{P\Big(X_t\in B_{-n}(y)\cap X_{\frac{t}{m}}\in {\Za}\cap \cdots \cap X_{\frac{(m-1)t}{m}}\in {\Za}\Big)}{P\Big(X_{\frac{t}{m}}\in {\Za}\cap \cdots \cap X_{\frac{(m-1)t}{m}}\in {\Za}\cap X_{t}\in {\Za}\Big)}\\
& \hspace{1in}= \dfrac{P\Big(X_t\in B_{-n}(y)\cap X_{\frac{t}{m}}\in {\Za}\cap \cdots \cap X_{\frac{(m-1)t}{m}}\in {\Za}\Big)}{P\Big(X_{\frac{t}{m}}-X_0\in {\Za}\cap \cdots \cap X_{t}- X_{\frac{(m-1)t}{m}}\in {\Za}\Big)}\\
& \hspace{1in}= \dfrac{P\Big(X_t\in B_{-n}(y)\cap X_{\frac{t}{m}}\in {\Za} \cap \cdots \cap X_{\frac{(m-1)t}{m}}\in {\Za}\Big)}{P\Big(X_{\frac{t}{m}}-X_0\in {\Za}\Big) \cdots P\Big(X_{t}- X_{\frac{(m-1)t}{m}}\in {\Za}\Big)}
\end{align*}
where independence of the increments of the process implies the ultimate equality.  Write the above probabilities as integrals to obtain
\begin{align*}
&P\Big(X_t\in B_{-n}(y)\;{\Big|}\;X_{\frac{t}{m}}\in {\Za}\cap \cdots \cap X_{\frac{(m-1)t}{m}}\in {\Za} \cap X_t\in {\Za}\Big) \\&\hspace{.25in} = \int_{{\Za}}\cdots\int_{{\Za}}\cdot\int_{B_{-n}(y)} \rho\Big(\frac{t}{m}, z_1\Big)\rho\Big(\frac{t}{m}, z_2-z_1\Big)\cdots \rho\Big(\frac{t}{m}, z_{m-1}-z_{m-2}\Big) \\&\hspace{.92in}\cdot \rho\Big(\frac{t}{m}, z-z_{m-1}\Big)\,{\rm d}z\,{\rm d}z_{m-1}\,\cdots \,{\rm d}z_1\cdot \frac{1}{\prod_{1\leq i\leq m} \int_{{\Za}} \rho\big(\frac{t}{m}, z_i\big)\,{\rm d}z_i}\\&\hspace{.25in}> \int_{B_{-n}(y)} \rho\Big(\frac{t}{m} + \cdots + \frac{t}{m}, z\Big)\,{\rm d}z \cdot \prod_{1\leq i\leq m} \int_{{\Za}} \rho\Big(\frac{t}{m}, z_i\Big)\,{\rm d}z_i\\&\hspace{3.25in} \cdot \frac{1}{\prod_{1\leq i\leq m} \int_{{\Za}} \rho\big(\frac{t}{m}, z_i\big)\,{\rm d}z_i}\\&\hspace{.25in}= \int_{B_{-n}(y)} \rho(t, z)\,{\rm d}z = P\big(X_t\in B_{-n}(y)\big),
\end{align*}
where Proposition~\ref{6:prop:4} implies the inequality above.
\end{proof}

\begin{proposition}\label{6:prop:5}
For all $y$ in ${\Za}$, if $n$ is a positive integer that is large enough so that $B_{-n}(y)$ is a subset of $B_a$, then \[P\big(X_t\in B_{-n}(y){\big |}\;||X||_t\leq p^a\big) > P\big(X_t\in B_{-n}(y)\big).\]
\end{proposition}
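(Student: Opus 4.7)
The plan is to approximate $\{||X||_t \leq p^a\}$ by finite-dimensional cylinder events and invoke Corollary~\ref{6:cor:4}. For each positive integer $m$, I would define $A_m = \{X_{jt/m} \in B_a \;:\; j = 1, \ldots, m\}$. By the right continuity of Skorokhod sample paths, the dyadic subsequence $A_{2^k}$ is decreasing in $k$ with intersection $\{||X||_t \leq p^a\}$, so continuity of probability gives $P(A_{2^k}) \to P(||X||_t \leq p^a)$ and $P(X_t \in B_{-n}(y), A_{2^k}) \to P(X_t \in B_{-n}(y), ||X||_t \leq p^a)$. Applying Corollary~\ref{6:cor:4} at each $k$ yields the strict inequality $P(X_t \in B_{-n}(y) \mid A_{2^k}) > P(X_t \in B_{-n}(y))$, and a direct passage to the limit produces the non-strict version
\[
P(X_t \in B_{-n}(y) \mid ||X||_t \leq p^a) \;\geq\; P(X_t \in B_{-n}(y)).
\]

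The main obstacle is preserving strictness in the limit, since limits of strict inequalities are generally only weak. My strategy is to show that the sequence $k \mapsto P(X_t \in B_{-n}(y) \mid A_{2^k})$ is monotonically non-decreasing: conditioning on the path remaining in $B_a$ at a finer set of dyadic times should only concentrate the distribution of $X_t$ more tightly near the origin, and therefore within $B_{-n}(y)$. Combined with the strict base case $P(X_t \in B_{-n}(y) \mid A_2) > P(X_t \in B_{-n}(y))$ given by Corollary~\ref{6:cor:4} at $m = 2$, monotonicity would immediately upgrade the passage-to-limit to
\[
P(X_t \in B_{-n}(y) \mid ||X||_t \leq p^a) \;\geq\; P(X_t \in B_{-n}(y) \mid A_2) \;>\; P(X_t \in B_{-n}(y)),
\]
finishing the proof.

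Applying the standard ratio inequality to the disjoint decomposition $A_{2^k} = A_{2^{k+1}} \sqcup (A_{2^k} \setminus A_{2^{k+1}})$, the monotonicity claim reduces to showing
\[
P(X_t \in B_{-n}(y) \mid A_{2^k} \setminus A_{2^{k+1}}) \;\leq\; P(X_t \in B_{-n}(y) \mid A_{2^{k+1}}),
\]
i.e., that paths failing the refined condition by exiting $B_a$ at some new finer dyadic time are less likely to land near $y$ at time $t$ than paths satisfying all the finer constraints. I expect this to follow by inductively iterating the Fourier-analytic integral inequality that drives Lemma~\ref{6:lem:2} and its successors, suitably adapted to the more general conditioning structure that the set $A_{2^k} \setminus A_{2^{k+1}}$ imposes. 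The technical heart of the argument will be the bookkeeping needed to match each additional constraint in the refinement to an invocation of the strict inequality in Lemma~\ref{6:lem:2}, and this is the step I expect to be the most delicate.
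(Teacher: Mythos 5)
Your overall skeleton --- discretize the running supremum by the cylinder events $A_m$, apply Corollary~\ref{6:cor:4} at each finite level, and pass to the limit using right continuity of the sample paths --- is exactly the route the paper takes, and the first half of your argument, which yields the weak inequality $\geq$, is sound; the dyadic nesting you introduce even makes the set-convergence step a little cleaner than working with all $m$ at once. The genuine gap is the monotonicity claim on which your entire upgrade from $\geq$ to $>$ rests. You assert that $k\mapsto P\big(X_t\in B_{-n}(y)\mid A_{2^k}\big)$ is non-decreasing, correctly reduce this (via the mediant decomposition of $A_{2^k}=A_{2^{k+1}}\sqcup(A_{2^k}\setminus A_{2^{k+1}})$) to
\[
P\big(X_t\in B_{-n}(y)\mid A_{2^k}\setminus A_{2^{k+1}}\big)\ \leq\ P\big(X_t\in B_{-n}(y)\mid A_{2^{k+1}}\big),
\]
and then leave this unproved, saying you ``expect'' it to follow from iterating Lemma~\ref{6:lem:2}. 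It does not follow from anything established in the paper: every integral inequality in Lemma~\ref{6:lem:2} and Propositions~\ref{6:prop:3} and~\ref{6:prop:4} concerns paths constrained to lie \emph{inside} $\Za$ at the sampled times, whereas your comparison event forces the path \emph{outside} $\Za$ at at least one of the new dyadic times, a configuration that would require a new, reversed analogue of Lemma~\ref{6:lem:2}. The heuristic you offer --- that finer conditioning ``concentrates the distribution of $X_t$ more tightly near the origin, and therefore within $B_{-n}(y)$'' --- also does not support the claim as stated, because $B_{-n}(y)$ is centered at an arbitrary $y\in\Za$, possibly with $|y|=p^a$; concentration toward $0$ is not the same as a gain for every small ball inside $\Za$. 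So the step you defer as ``the most delicate'' is in fact the entire content of the strictness, and it is missing.

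For comparison, the paper does not attempt such an upgrade. It proves, for each $m$, the strict bound
\[
P\big(X_t\in B_{-n}(y)\mid ||X||_t^{(m)}\leq p^a\big) \;>\; P\big(X_t\in B_{-n}(y)\big)\cdot\dfrac{P\big(X_{\frac{t}{m}}\in\Za\cap\cdots\cap X_{\frac{(m-1)t}{m}}\in\Za\big)}{P\big(||X||_t^{(m)}\leq p^a\big)},
\]
observes that the ratio on the right tends to $1$, and lets $m\to\infty$ on both sides. Your instinct that a limit of strict inequalities is only weak is correct and applies to the paper's own passage to the limit as well; note, however, that only the non-strict form of this proposition is ultimately used, in Theorem~\ref{6:thm:6}. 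If you want to salvage strictness rather than settle for $\geq$, the more promising direction is not your monotonicity claim but a quantitative version of the gap in Lemma~\ref{6:lem:2} that is uniform enough in $m$ to survive the limit; as written, your proposal establishes only the weak inequality.
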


\begin{proof}
Denote by $||X||_t^{(m)}$ the random variable given by\[||X||_t^{(m)} = \max\Big\{|X_i|\colon i\in \Big\{\frac{t}{m}, \frac{2t}{m}, \dots, \frac{(m-1)t}{m}, t\Big\}\Big\}.\]  Since \[||X||_t^{(m)} \leq p^a\] if and only if for all $i$ in $\{1, \dots, m\}$, \[X_{\frac{it}{m}}\in {\Za},\] the corollary to Proposition \ref{6:prop:4} implies that%
\begin{align*}
&P\big(X_T\in B_{-n}(y) {\big|}\;||X||_t^{(m)}\leq p^a\big)  
\\&\qquad >  P\big(X_t\in B_{-n}(y)\big)\dfrac{P\big(\big(X_t\in B_{-n}(y)\big)\cap \big(||X||_t^{(m)}\leq 1\big)\big)}{P\big(||X||_t^{(m)}\leq p^a\big)}%
\\& \qquad = \dfrac{P\Big(X_{\frac{t}{m}}\in {\Za}\cap \cdots \cap X_{\frac{(m-1)t}{m}}\in {\Za}\cap X_t\in {\Za} \cap X_t\in B_{-n}(y)\Big)}{P\big(||X||_t^{(m)}\leq p^a\big)}%
\\& \qquad = \dfrac{P\Big(X_{\frac{t}{m}}\in {\Za}\cap \cdots \cap X_{\frac{(m-1)t}{m}}\in {\Za}\cap X_t\in B_{-n}(y)\Big)}{P\big(||X||_t^{(m)}\leq p^a\big)}%
\\& \qquad = \dfrac{P\Big(X_t\in B_{-n}(y)\;{\big|}\;X_{\frac{t}{m}}\in {\Za}\cap \cdots \cap X_{\frac{(m-1)t}{m}}\in {\Za}\Big)}{P\big(||X||_t^{(m)}\leq p^a\big)}%
\\&\hspace{2.5in} \cdot P\Big(X_{\frac{t}{m}}\in {\Za}\cap \cdots \cap X_{\frac{(m-1)t}{m}}\in {\Za}\Big)%
\\& \qquad > P\big(X_t\in B_{-n}(y)\big)\dfrac{P\Big(X_{\frac{t}{m}}\in {\Za}\cap \cdots \cap X_{\frac{(m-1)t}{m}}\in {\Za}\Big)}{P\big(||X||_t^{(m)}\leq p^a\big)},%
\end{align*}
hence %
\begin{align}\label{LLimits:of:Conditioned}
&P\big(X_T\in B_{-n}(y) {\big|}\;||X||_t^{(m)}\leq p^a\big) \notag\\&\hspace{.5in}> P\big(X_t\in B_{-n}(y)\big)\dfrac{P\Big(X_{\frac{t}{m}}\in\Za\cap \cdots \cap X_{\frac{(m-1)t}{m}}\in\Za\Big)}{P\big(||X||_t^{(m)}\leq p^a\big)}.
\end{align}
 The right continuity of the Skorohod paths implies that \begin{align}\label{sec3:aaa}\lim_{m\to \infty} P\big(||X||_t^{(m)} \leq p^a\big) = P\big(||X||_t\leq p^a\big),\end{align} that \begin{align}\label{sec3:bbb}\lim_{m\to \infty} P\big(X_t \in B_{-n}(y)\cap ||X||_t^{(m)} \leq p^a\big) = P\big(X_t \in B_{-n}(y)\cap ||X||_t\leq p^a\big),\end{align} and that \begin{align}\label{sec3:ccc}\lim_{m\to \infty} P\Big(X_{\frac{t}{m}}\in {\Za}\cap \cdots \cap X_{\frac{(m-1)t}{m}}\in {\Za}\Big) = P\big(||X||_t^{(m)}\leq p^a\big).\end{align}   Use the fact that \[P\big(X_t\in B_{-n}(y)\cap ||X||_t^{(m)} \leq p^a\big)\ne 0\] and \[P\big(X_t\in B_{-n}(y)\cap ||X||_t\leq p^a\big) \ne 0\] together with the equalities \eqref{sec3:aaa}, \eqref{sec3:bbb}, and \eqref{sec3:ccc} to obtain the equalities \begin{align*}&\lim_{m\to \infty}P\big(X_t\in B_{-n}(y)\big)\dfrac{P\Big(X_{\frac{t}{m}}\in {\Za}\cap \cdots \cap X_{\frac{(m-1)t}{m}}\in {\Za}\Big)}{P\big(||X||_t^{(m)}\leq p^a\big)} = P\big(X_t\in B_{-n}(y)\big)\end{align*} and \[\lim_{m\to \infty} P\big(X_T\in B_{-n}(y) {\big|}\;||X||_t^{(m)}\leq p^a\big) = P\big(X_T\in B_{-n}(y) {\big|}\;||X||_t\leq p^a\big).\] These two equalities together with \eqref{LLimits:of:Conditioned} imply the proposition.
\end{proof}

\begin{proposition}\label{6:prop:5b}
For all $y$ in ${\Za}$, if $n$ is a positive integer that is large enough so that $B_{-n}(y)$ is a subset of $B_a$, then \[P\big(||X||_t\leq p^a{\big |}\;X_t\in B_{-n}(y)\big) > P\big(||X||_t\leq p^a\big).\]
\end{proposition}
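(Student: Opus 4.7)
The plan is to deduce this proposition immediately from Proposition~\ref{6:prop:5} via Bayes' rule; no additional analytic work seems to be needed. Set $A = \{||X||_t \leq p^a\}$ and $B = \{X_t \in B_{-n}(y)\}$. Since $B_{-n}(y)$ is a subset of $B_a$, both events $A$ and $B$ have strictly positive probability (the first by Theorem~\ref{MainEstimate}, the second because $\rho(t,\cdot)$ is strictly positive on $B_a$, as one sees from equation~\eqref{sec3:formulaforf}), so conditional probabilities are well-defined and nonzero.

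The key identity is the symmetric form of Bayes' rule:
\[P(A \mid B) = \frac{P(A \cap B)}{P(B)} = \frac{P(B \mid A)\, P(A)}{P(B)}.\]
First I would invoke Proposition~\ref{6:prop:5}, which asserts
\[P(B \mid A) > P(B),\]
that is, conditioning on the confinement event strictly increases the probability of landing in the small ball $B_{-n}(y)$. Dividing this strict inequality through by $P(B)$ and multiplying by $P(A)$ yields
\[\frac{P(B \mid A)\, P(A)}{P(B)} > P(A),\]
and the left-hand side is exactly $P(A \mid B)$ by Bayes' rule. Combining these steps gives $P(||X||_t \leq p^a \mid X_t \in B_{-n}(y)) > P(||X||_t \leq p^a)$, which is the stated inequality.

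There is essentially no obstacle here: all the analytic heavy lifting (Lemmas~\ref{6:lem:1} and~\ref{6:lem:2}, Proposition~\ref{6:prop:3}, Proposition~\ref{6:prop:4}, and the limit arguments in Proposition~\ref{6:prop:5}) has already been done. The only thing to verify carefully is the positivity of $P(X_t \in B_{-n}(y))$, so that the Bayes denominator is nonzero and the strict inequality is preserved under division; this follows directly from the explicit formula \eqref{sec3:formulaforf} for $\rho(t,\cdot)$, which shows that $\rho(t,x)$ has strictly positive integral over any nonempty open subset of $B_a$. Thus the proof will be just a few lines long.
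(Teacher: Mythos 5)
Your proof is correct and takes essentially the same route as the paper: the paper likewise rewrites $P\big(||X||_t\leq p^a\;\big|\;X_t\in B_{-n}(y)\big)$ via Bayes' rule as $P\big(X_t\in B_{-n}(y)\;\big|\;||X||_t\leq p^a\big)\cdot P\big(||X||_t\leq p^a\big)/P\big(X_t\in B_{-n}(y)\big)$ and then applies Proposition~\ref{6:prop:5}. The only difference is that you explicitly verify the positivity of the conditioning event's probability, a point the paper leaves implicit.
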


\begin{proof}

Proposition~\ref{6:prop:5} implies that
\begin{align*}
&P\big(||X||_t\leq p^a{\big |}\;X_t\in B_{-n}(y)\big) \\&\hspace{.5in}= \dfrac{P\big(||X||_t\leq p^a\cap X_t\in B_{-n}(y)\big)}{P\big(X_t\in B_{-n}(y)\big)}\\%
&\hspace{.5in}= \dfrac{P\big(X_t\in B_{-n}(y) \cap ||X||_t\leq p^a\big)}{P\big(||X||_t\leq p^a\big)}\dfrac{P\big(||X||_t\leq p^a\big)}{P\big(X_t\in B_{-n}(y)\big)}\\%
&\hspace{.5in}= P\big(X_t\in B_{-n}(y){\big |}\; ||X||_t\leq p^a\big)\dfrac{P\big(||X||_t\leq p^a\big)}{P\big(X_t\in B_{-n}(y)\big)}\\%
&\hspace{.5in}> P\big(X_t\in B_{-n}(y)\big)\dfrac{P\big(||X||_t\leq p^a\big)}{P\big(X_t\in B_{-n}(y)\big)} = P\big(||X||_t\leq p^a\big).%
\end{align*}

\end{proof}

\begin{theorem}\label{6:thm:6}
Suppose that $x$ is in $\mathds Q_p$ and $y$ is in ${\Za}+x$.  For all $t$ in $(0,T]$, \[P_{t, x, y}\big(||X-x||_T\leq p^a\big) \ge P_x(||X-x||_T\leq p^a).\]
\end{theorem}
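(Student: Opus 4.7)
The strategy is to reduce the statement to an assertion about the pre-$t$ interval $[0,t]$ and then apply Proposition~\ref{6:prop:5b} via a limit as $n \to \infty$.

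First, the translation relation~\eqref{conditionalProb} lets me assume $x = 0$, so it suffices to show, for $y \in B_a$, that $P_{t,0,y}(\|X\|_T \le p^a) \ge P(\|X\|_T \le p^a)$. Next, I would factor both probabilities across time $t$. The event $\{\|X\|_T \le p^a\}$ equals $\{\|X\|_t \le p^a\} \cap \{\sup_{t \le s \le T} |X_s| \le p^a\}$. Under $P$, Theorem~\ref{MainEstimate} applied at times $t$ and $T$ gives
\[
P(\|X\|_T \le p^a) \;=\; P(\|X\|_t \le p^a)\cdot e^{-\sigma\alpha(T-t)p^{-ab}}.
\]
Under $P_{t,0,y}$, the Markov property at time $t$ (which holds for the bridge because after $t$ no further conditioning is imposed, so the post-$t$ segment is $P_y$-distributed) makes the pre- and post-$t$ events independent. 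Since $y \in B_a$, ultrametricity gives $B_a(y) = B_a$, so $|X_s|\le p^a$ iff $|X_s - y| \le p^a$; hence the post-$t$ factor equals $P_y(\|X-y\|_{T-t}\le p^a) = e^{-\sigma\alpha(T-t)p^{-ab}}$ by~\eqref{sec3:precor:eq}. The common exponential cancels and the theorem reduces to
\[
P_{t,0,y}(\|X\|_t \le p^a) \;\ge\; P(\|X\|_t \le p^a).
\]

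For this reduced inequality, I exploit the continuity in $y$ of the bridge family noted in Section~\ref{two}, representing $P_{t,0,y}$ as the limit, as $n \to \infty$, of the conditional laws $P(\,\cdot\mid X_t \in B_{-n}(y))$. Applying Proposition~\ref{6:prop:5b} for each $n$ large enough that $B_{-n}(y)\subset B_a$ gives the strict inequality
\[
P\bigl(\|X\|_t \le p^a \mid X_t \in B_{-n}(y)\bigr) \;>\; P(\|X\|_t \le p^a),
\]
which becomes the desired weak inequality in the limit.

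The main obstacle I expect is legitimizing the limit representation of $P_{t,0,y}$: the bridge conditions on a null event, so one must appeal to the continuity of the regular conditional probability family $y \mapsto P_{t,0,y}$ and the fact that $\{\|X\|_t \le p^a\}$ is a closed-open event on the Skorokhod space to conclude that the finite-radius conditional probabilities genuinely converge to $P_{t,0,y}(\|X\|_t \le p^a)$. The Markov splitting of $P_{t,0,y}$ across time $t$ is a comparatively routine consequence of the independent-increments structure of $X$, but still merits explicit verification since it underlies the factorization that lets the post-$t$ exponential cancel on both sides.
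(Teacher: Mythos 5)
Your proposal is correct and takes essentially the same route as the paper: both reduce the claim to the pre-$t$ inequality $P\big(||X||_t\leq p^a\;\big|\;X_t=z\big)\ge P\big(||X||_t\leq p^a\big)$ by writing the bridge law as the limit of the conditional laws $P\big(\,\cdot\,\big|\;X_t\in B_{-n}(z)\big)$ and invoking Proposition~\ref{6:prop:5b}, and both handle the post-$t$ segment by the independent-increments factorization at time $t$. The only cosmetic difference is that you cancel the common exponential factor $\e^{-\sigma\alpha(T-t)p^{-ab}}$ explicitly, while the paper multiplies $P\big(||X||_t\leq p^a\big)P\big(||X||_{T-t}\leq p^a\big)$ back into $P\big(||X||_T\leq p^a\big)$; these are the same computation.
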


\begin{proof}
Denote by $z$ the $p$-adic number $y-x$.  Write the measure on the brownian bridges as a limit of conditional measures with respect to events of nonzero probability to obtain
\begin{align}\label{sec4:eq:in:pf}
P\big(||X||_t\leq p^a\;{\big |}\;X_t = z\big) &= \lim_{n\to\infty}P\big(||X||_t\leq p^a{\big |}\;X_t\in B_{-n}(y)\big)%
\notag\\& \ge \lim_{n\to \infty}P\big(||X||_t\leq p^a\big) = P\big(||X||_t\leq p^a\big),
\end{align}
where Proposition~\ref{6:prop:5b} implies the inequality \eqref{sec4:eq:in:pf}.  Independence of the increments of the process implies that 
\begin{align}\label{sec4:eq:in:pf:a}
P\big(||X||_T\leq p^a\;{\big |}\;X_t = z\big) &= P\big(||X||_t\leq p^a\;{\big |}\;X_t = z\big)P_z\big(||X||_{T-t}\leq p^a\big)\notag\\&\ge P\big(||X||_t\leq p^a\big)P_z\big(||X||_{T-t}\leq p^a\big)
\\&= P\big(||X||_t\leq p^a\big)P\big(||X||_{T-t}\leq p^a\big)\notag\\& = P\big(||X||_T\leq p^a\big),\notag
\end{align}
where the inequality \eqref{sec4:eq:in:pf:a} follows from \eqref{sec4:eq:in:pf}. The inequality 
\begin{equation}\label{sec4:eq:in:pf:b}
P\big(||X||_T\leq p^a\;{\big |}\;X_t = z\big) \ge P\big(||X||_T\leq p^a\big)
\end{equation}
together with the equalities 
\begin{equation*}\label{sec4:end:1}
P\big(||X||_T\leq p^a\;{\big |}\;X_t = z\big)  = P_{t, 0, y-x}\big(||X||_T\leq p^a\big) =   P_{t, x, y}\big(||X||_T\leq p^a\big) 
\end{equation*}
and
\begin{equation*}\label{sec4:end:2}
P(||X||_T\leq p^a) = P_x(||X-x||_T\leq p^a),
\end{equation*} 
imply that \[P_{t, x, y}\big(||X-x||_T\leq p^a\big) \ge P_x(||X-x||_T\leq p^a).\]
\end{proof}



%
%
\end{document}